\newcommand\ie{{\em i.e.}~}
\def\A{{\mathcal A}}
\def\Th{\Theta}
\def\N{\mathbb{N}}
\def\X{\mathscr X}
\def\BC{\mathsf{BC}}
\def\AA{\mathfrak A}
\def\V{\mathcal V}
\def\C{\mathcal{C}}
\def\B{{\mathcal B}}
\def\D{{\mathcal D}}
\def\H{\mathcal H}
\def\Q{\mathcal Q}
\def\P{\mathcal P}
\def\R{\mathcal{R}}
\def\L{\mathfrak L}
\def\U{\mathcal U}
\def\BB{\mathfrak B}
\def\CC{{\mathfrak C}}
\def\PP{{\mathfrak P}}
\def\RR{{\mathfrak R}}
\def\J{\mathcal J}
\def\JJ{\mathfrak J}
\def\W{{\mathcal W}}
\def\si{\sigma}
\def\Si{\Sigma}
\def\sp{\mathop{\mathrm{sp}}\nolimits}
\def\spe{{\rm sp}_{{\rm ess}}}
\def\Aut{\mathfrak{Aut}}
\def\p{\parallel}
\def\<{\langle}
\def\>{\rangle}
\def\Op{\mathfrak{Op}}
\providecommand{\CC}{\mathfrak{C}}
\def\supp{\mathop{\mathrm{supp}}\nolimits}
\def\ltwo{\mathsf{L}^{\:\!\!2}}
\newtheorem{Theorem}{Theorem}[section]
\newtheorem{Remark}[Theorem]{Remark}
\newtheorem{Lemma}[Theorem]{Lemma}
\newtheorem{Proposition}[Theorem]{Proposition}
\numberwithin{equation}{section}
\newsavebox{\fsuavizada}
\sbox{\fsuavizada} {$\chi_{W}^\varphi$}
\newsavebox{\EFsuave}
\sbox{\EFsuave} {$C^\infty_{{\rm c}}(\Xi)$}
\newsavebox{\Esuave}
\sbox{\Esuave} {$\rm BC_{u}^\infty(\Xi)$}
\newsavebox{\Vecindad}
\sbox{\Vecindad} {$\mathcal W$}
\newsavebox{\SupSuave}
\sbox{\SupSuave} {$supp \varphi$}
\newsavebox{\Ideal}
\sbox{\Ideal} {$\CC^F(\Si)^\infty$}
\newsavebox{\Bola}
\sbox{\Bola}{$\mathfrak{B}(v,\delta_v)$}
\newsavebox{\Bolauv}
\sbox{\Bolauv}{$\mathfrak{B}((u_i,v_i),\delta_{u_i,v_i})$}
\newsavebox{\Bolai}
\sbox{\Bolai}{$\mathfrak{B}(v_i,\delta_{v_i})$}
\newsavebox{\SupremoConvolucion}
\sbox{\SupremoConvolucion}{$\displaystyle\sup_{x \in \usebox{\Vecindad} 
, v \in\usebox{\SupSuave}} |\chi_{\usebox{\Vecindad}}(x)f(x-v)|$}
\newsavebox{\uSupremoConvolucion}
\sbox{\uSupremoConvolucion}{$\displaystyle\sup_{x \in \usebox{\Vecindad} 
, v \in\usebox{\SupSuave}} |\chi_{\usebox{\Vecindad}}(x)X^{u''}f(x-v)|$}
\begin{document}


\title{Localization Results for Zero Order\\ Pseudodifferential Operators}

\date{\today}

\author{J. Garc\'ia and M. M\u antoiu \footnote{
\textbf{2010 Mathematics Subject Classification: Primary 35S05, 46L55, Secundary 47C15, 81Q10.}
\newline
\textbf{Key Words:}  Pseudodifferential operator, spectrum, Rieffel quantization, $C^*$-algebra, propagation}
}
\date{\small}
\maketitle \vspace{-1cm}


\begin{abstract}
We show phase space localization at suitable energies for zero order pseudodifferential operators, implying
non-propagation properties for the associated evolution groups.
\end{abstract}

\section{Introduction}\label{duci}

The main purpose of this article is to prove some phase-space localization results for the functional calculus and for the evolution group of certain Weyl pseudodifferential operators $H=\Op(f)$ acting in the Hilbert space $\H:=L^2(\mathbb R^n)$\, with symbols presenting full phase-space anisotropy. Very roughly, a symbol $\mathbb R^n\times(\mathbb R^n)^*\ni(x,\xi)\mapsto f(x,\xi)\in\mathbb R$ has full phase-space anisotropy if it has non-trivial behavior both for $|x|\rightarrow\infty$ and $|\xi|\rightarrow\infty$\,. The trivial behavior would be convergence to either zero or infinity. 

To describe the localization issues let us consider a (maybe unbounded) self-adjoint operator $H$ in the Hilbert space $\H:=L^2(\mathbb R^n)$\,. We think it to be the quantum Hamiltonian of a physical system moving in $\mathbb R^n$\,, so its evolution group $\{e^{itH}\!\mid\! t\in\mathbb R\}$ describes the time evolution of the quantum system. Thus, if at the initial moment the system is in a state modellized by the normalized vector $v\in\H$\,, at time $t$ it will be in the state associated to $v_t:=e^{itH}v$\,.

By general principles of Quantum Mechanics, the probability at time $t$ for the system to be localized within the Borel subset $U$ of $\mathbb R^n$ is given by the number 
$$
\p\!\chi_Ue^{itH}v\!\p^2\,=\int_U \!dx\,|v_t(x)|^2\,.
$$ 
Very often one is interested in the behavior of this quantity when the initial state $v$ has a certain localization in energy. If $E$ is a Borel subset of $\mathbb R$\,, we say that the state has energy belonging to $E$ if $v=\chi_E(H)v$\,, where the characteristic function of $E$ is applied to the self-adjoint operator $H$ via the usual Borel functional calculus. For technical reasons we also consider as interesting vectors satisfying the condition $v=\rho(H)v$\,, where $\rho:\mathbb R\rightarrow\mathbb R_+$ is a continuous (or a smooth) function; it can be, for example, a continuous approximation of the characteristic function $\chi_E$\,. Anyhow, we are motivated to consider the dependence of the quantity $\p\!\chi_Ue^{itH}\rho(H)v\!\p^2$ on the parameters $U,\rho$ and $t$\,. The normalization of $v$ is not essential, so we shall replace it by an arbitrary vector $u$\,.

The type of result we are looking for say that, under certain assumptions on $H$ and $\rho$ and for a given family $\mathcal U$ of non-void Borel subsets of $\mathbb R^n$\,, for every $\varepsilon>0$ there is an element $U\in\mathcal U$ such that 
\begin{equation}\label{hans}
\p\!\chi_{U}e^{itH}\rho(H)u\!\p^2\,\le\varepsilon^2\!\p\!u\!\p^2\ \,{\rm for\ all}\ \,t\in\mathbb R\,\ {\rm and}\,\ u\in\H\,.
\end{equation} 
Admitting that in some sense the family $\mathcal U$ converges to some region $F$ (eventually situated "at infinity"), this means roughly that states with energies contained in the support of the function $\rho$ cannot propagate towards $F$\,. 

All these being said, let us notice however that (\ref{hans}), although dynamically significant, does not really have a dynamical nature. It is perfectly equivalent to the estimate
\begin{equation}\label{heinz}
\p\!\chi_U\rho(H)\!\p_{\mathbb B(\H)}\,\le\varepsilon\,,
\end{equation}
written in terms of the operator norm of $\mathbb B(\H)$\,, the $C^*$-algebra of all linear bounded operators in the Hilbert space $\H$\,. It is obvious that such an estimate needs some tuning between the energy-localization function $\rho$ and the family $\mathcal U$\,; without it one can only write
$$
\p\!\chi_U\rho(H)\!\p_{\mathbb B(\H)}\,\le\,\p\!\chi_U\!\p_{\mathbb B(\H)}\p\!\rho(H)\!\p_{\mathbb B(\H)}\,=\sup_{\lambda\in{\sp(H)}}\rho(\lambda)\,,
$$
and clearly we are interested in the case in which the support of $\rho$ has a non-trivial intersection with the spectrum $\sp(H)$ of the Hamiltonian $H$\,.

A simple-minded relevant situation is as follows: 
If the support of the function $\rho$ is disjoint from the essential spectrum ${\sp_{\rm ess}(H)}$ of $H$, it is known that the operator $\rho(H)$ is compact (finite-rank actually). If, in addition, this support contains points of the discrete spectrum ${\sp_{\rm dis}(H)}:={\sp(H)}\setminus{\sp_{\rm ess}(H)}$\,, then $\rho(H)\ne 0$\,. Let $\mathcal U$ be the filter formed by the complements of all the relatively compact subsets of $\mathbb R^n$\,. Then the family of operators of multiplication by $\chi_U$ converges strongly to zero. Multiplication with a compact operator improves this to norm convergence, so for each $\varepsilon>0$ there is a sufficiently large (relatively) compact set $K\in\mathbb R^n$ such that $\p\!\chi_{K^{\rm c}}\rho(H)\!\p_{\mathbb B(\H)}\,\le\varepsilon$\,.
In dynamical terms, this would mean that states localized in the discrete spectrum cannot propagate to infinity. 

For less trivial situations we consider the case of generalized Schr\"odinger operators $H=\Op(f)$ in $L^2(\mathbb R^n)$ defined by the Weyl quantization of the symbol $f(x,\xi)=h(\xi)+V(x)$\,, where $V:\mathbb R^n\rightarrow\mathbb R$ and $h:(\mathbb R^n)^*\rightarrow\mathbb R$ are convenient functions. 
Then 
$$
H=\Op(f)=h(D)+V(Q)\,,
$$ 
where $Q$ is the position operator, $D:=-i\nabla$ is the momentum and $h(D),V(Q)$ can also be constructed by the usual functional calculus associated to (families of commuting) self-adjoint operators. Of course $h(D)$ is a convolution operator and even a constant coefficient differential operator if $h$ is a polynomial\,, while $V(Q)$ is the operator of multiplication with the function $V$\,.

Assume now that $n=1$, that $V$ is continuous and 
$$
\lim_{x\rightarrow\pm\infty}V(x)=V_{\pm}\in\mathbb R\ \ {\rm with}\ \,V_-<V_+
$$ 
and take for simplicity $h(\xi):=\xi^2$\,, so $H=-\Delta+V(Q)$ is a one-dimensional Schr\"odinger Hamiltonian with configuration space anisotropy. Below $V_-$ the spectrum of $H$ is discrete, so one can apply the discussion above. But it is more interesting to take $\rho$ supported in the interval $(V_-,V_+)$\,. If the convergence of $V$ towards the limits $V_{\pm}$ is fast enough, propagation towards infinity is possible in this region. But for physical reasons one expects this to happen only "to the left". This is not difficult to prove rigorously: for every $\varepsilon>0$ there exists a real number $a$ such that 
$$
\p\!\chi_{(a,+\infty)}(Q)\rho(H)\!\p_{\mathbb B(\H)}\,\le\varepsilon\,.
$$ 
Thus "propagation to the right is forbidden" at energies smaller than $V_+$\,. In this example we make use of the filter base $\mathcal U:=\{(a,\infty)\!\mid\! a\in\mathbb R\}$ formed of neighborhoods of the point ${+\infty}$ in the two-point compactification $[-\infty,+\infty]$ of the real axis.

A more complicated version is less easy to guess just by physical grounds. We consider the same Hamiltonian $H=-\Delta+V(Q)$ for $n=1$ but now 
$$
\lim_{x\rightarrow\pm\infty}[V(x)-V_{\pm}(x)]=0\,,
$$ 
where $V_{\pm}$ are two periodic functions, with periods $T_\pm>0$\,. In this case ${\sp_{\rm ess}(H)}={\sp(H_-)}\cup{\sp(H_+)}$\,, where the asymptotic Hamiltonians $H_\pm:=-\Delta+V_\pm(Q)$\,, being periodic, have a band structure for the spectrum. We don't know if this intuitive enough, but it can be shown however, that if the support of $\rho$ does not meet ${\sp(H_+)}$\,, then propagation to the right is impossible in the same precise meaning as above. It is not difficult to construct a two-tori compactification of $\mathbb R$ of the form $\Omega:=(\mathbb R/T_-\mathbb Z)\cup\mathbb R\cup(\mathbb R/T_+\mathbb Z)$ 
such that $V$ satisfies the stated conditions if and only if it extends to a continuous function on this compactification. Then the two asymptotic Hamiltonians are fabricated from the restrictions of this extension to the two tori and the regions of non-propagation can be once again described in terms of neighborhoods of these tori in the compactification.

To illustrate the different types of anisotropy on the simple example of generalized Schr\"odinger operators, assume again that $n=1$ and $f(x,\xi)=h(\xi)+V(x)$\,,
where $V:\mathbb R^n\rightarrow\mathbb R$ and $h:(\mathbb R^n)^*\rightarrow\mathbb R$ are continuous functions. Let us assume for simplicity that 
$$\lim_{\xi\rightarrow\pm\infty}h(\xi)=h_{\pm}\ \ {\rm and}\ \,\lim_{x\rightarrow\pm\infty}V(x)=V_{\pm}\,;
$$ 
the limits are elements of the extended real axis. If $h_{\pm}=\infty$ (or if $h_{\pm}=0$) and $V_{\pm}\in\mathbb R$\,, the operator is said to possess {\it configuration space anisotropy} (especially if $V_-\ne V_+$)\,. But if $h_\pm\in\mathbb R$ and $V_{\pm}\in\mathbb R$\,, we are in the presence of {\it a full phase-space anisotropic problem}. 

For a given self-adjoint operator $L$ we denote by ${\rm sp}(L)$ the spectrum and by ${\rm sp}_{{\rm ess}}(L)$ the essential spectrum. In the example above, if $h_\pm=\infty$ (anisotropy in configuration space), denoting $\min\{g(y)\}$ by $g_m$ and $\max\{g(y)\}$ by $g_M$\,, one has 
\begin{equation}\label{franz}
{\sp}_{{\rm ess}}(H)=[h_m+\min(V_-,V_+),\infty)={\sp}[h(D)+V_-]\cup{\sp}[h(D)+V_+]\,.
\end{equation}
It is easy to generalize a result above to this case and show that if ${\rm supp}(\rho)$ does not meet 
$$
{\sp}[h(D)+V_+]=[h_m+V_+,\infty)\,,
$$ 
then for every $\varepsilon>0$ there exists $a\ge 0$ such that 
$$
\p\!\chi_{(a,+\infty)}(Q)\rho(H)\!\p_{\mathbb B(\H)}\,\le\varepsilon\,.
$$ 
A similar result leading to "non-propagation to the left" is available by replacing $+$ by $-$ and $(a,+\infty)$ with $(-\infty,-a)$\,.

On the other hand, for full phase-space anisotropy ($h_\pm\in\mathbb R$ and $V_\pm\in\mathbb R$)\ , the essential spectrum is given by four contributions
\begin{equation}\label{fritz}
\begin{aligned}
{\sp}_{{\rm ess}}(H)&={\sp}[h(D)+V_-]\cup{\sp}[h(D)+V_+]\cup{\sp}[V(Q)+h_-]\cup{\sp}[V(Q)+h_+]\\
&=[h_m+V_-,h_M+V_-]\cup[h_m+V_+,h_M+V_+]\\
&\cup[h_-+V_m,h_-+V_M]\cup[h_++V_m,h_++V_M]\,.
\end{aligned}
\end{equation}
In this case one can show once again that $\p\!\chi_{(a,+\infty)}(Q)\rho(H)\!\p_{\mathbb B(\H)}$ can be made arbitrary small for big $a\in\mathbb R_+$ if 
$$
{\rm supp}(\rho)\cap{\sp}[h(D)+V_+]=\emptyset
$$ 
and that $\p\!\chi_{(-\infty,-a)}(Q)\rho(H)\!\p_{\mathbb B(\H)}$ can be made arbitrary small for big $a\in\mathbb R_+$ if 
$$
{\rm supp}(\rho)\cap{\sp}[h(D)+V_-]=\emptyset\,.
$$ 
But a new phenomenon appears, connected to the presence of the two other components in the essential spectrum of $H$\,: Suppose that the support of $\rho$ does not meet ${\sp}[V(Q)+h_+]$\,. Then it can be shown that for every $\varepsilon>0$ there exists $b\in\mathbb R_+$ such that 
$$
\p\!\chi_{(b,+\infty)}(D)\rho(H)\!\p_{\mathbb B(\H)}\le\varepsilon
$$ 
(and a similar result for $+$ replaced by $-$)\,. This can be converted in an estimate of the form 
$$
\p\!\chi_{(b,+\infty)}(D)e^{itH}\rho(H)u\!\p\,\le\varepsilon\p\!u\!\p
$$ 
which is uniform in $t\in\mathbb R$ and $u\in L^2(\mathbb R)$\,. It is no longer a statement about the probability of spatial localisation, but one about the probability of the system to have momentum larger than the number $b$\,. 

In both cases the essential spectrum of the Hamiltonian $H=\Op(f)$ can be written as union of spectra of "asymptotic Hamiltonians" that can be in some way obtained by extending the symbol $f(x,\xi)=h(\xi)+V(x)$ to a compactification of the phase space $\Xi:=\mathbb R\times\mathbb R^*$ having the form of a square and then restricting it to the four edges situated "at infinity" (some simple reinterpretations are needed). Notice that the partial (configuration space) anisotropy is simpler: the restrictions to two of the edges do not contribute. In some sense the two corresponding asymptotic Hamiltonians are infinite and their spectrum is void. The reader is asked to imagine what would happen both at the level of the essential spectrum and at the level of localization estimates in the case of a pure momentum space anisotropy, when 
$$
\lim_{\xi\mapsto\pm\infty}h(\xi)=h_{\pm}\in\mathbb R\ \  {\rm and}\,\ \lim_{x\mapsto\pm\infty}V(x)=\infty\,.
$$

In $n$ dimensions and for more general types of anisotropy (recall the periodic limits) one expects more sophisticated things to happen. Suppose that our Hamiltonian $H$ is obtained via Weyl quantization from a convenient real function $f$ defined in phase-space $\Xi:=\mathbb R^n\times(\mathbb R^n)^*$\,. If its behaviour at infinity in both variables $(x,\xi)$ is sophisticaded enough (corresponding to what could be called "phase-space anisotropy") then one could expect the following picture:
\begin{enumerate}
\item
The essential spectrum is the (closure of the) union of spectra of a family of "asymptotic Hamiltonians" $H(F)$ associated to remote regions $F$ of phase-space. A way to express this would be to say that the behavior of $f$ at infinity in $\Xi$ can be described by a compactification $\Si=\Xi\sqcup\Si_\infty$ of $\Xi$ and that $F$ is a conveniently defined subset of "the boundary at infinity" $\Si_\infty$\,.
\item
If a bounded continuos function $\rho$ is supported away from one of the components $\sp[H(F)]$\,, then "propagation towards $F$ is forbidden" at energies belonging to the support of $\rho$\,. This follows from an estimate of the form $\p\!\Op(\chi_W^\infty)\rho(H)\!\p_{\mathbb B(\H)}\,\le\varepsilon$ 
written in terms of the Weyl quantization $\Op(\chi_W^\infty)\equiv\chi_W^\infty(Q,D)$ 
of a smooth regularization of the characteristic function $\chi_W$ of a subset $W$ of $\Xi$\,. For small $\varepsilon$\,, the set $W$ should be very close to the set $F$\,; for example it can be the intersection with $\Xi$ of a small neighborhood $\mathcal W$ of $F$ in the compactification $\Si$\,. 
\end{enumerate} 

Until recently, there have been few general results for the essential spectrum of phase-space anisotropic pseudodifferential operators and this was the main obstacle to getting localization estimates. Techniques involving crossed products, very efficient for configurational anisotropy \cite{GI1,GI2,GI3,Ma1,AMP}, are not available in such a case. In \cite{Ma2,Ma3} this problem was solved in a rather general setting, by using the good functorial properties of Rieffel's pseudodifferential calculus \cite{Rie1}. Roughly, if the symbol presents full phase-space anisotropy, the essential spectrum of the corresponding pseudodifferential operator can be written as the closed union of spectra of a family of "asymptotic" pseudodifferential operators. To obtain the symbols of these asymptotic operators one constructs a compactification of the phase space, which is naturally a dynamical system, and then determins the quasi-orbits of this dynamical system which are disjoint from the phase space itself. The extensions of the initial symbol to these quasi-orbits define the required asymptotic operators that contribute to the essential spectrum. 

In the present article we are going to show that Rieffel's calculus can also be used to get the localization estimates, leading in their turn to non-propagation results for the evolution group; this extends the treatment in \cite{AMP,MPR2,LMR} of purely configurational anisotropic systems. 

Let us describe briefly the content of this work. First, in the next section, we give a brief description of some previous results. This will hopefully motivate our approach to cover the full anisotropy. Section \ref{sectra} will review some properties of the Rieffel quantization, one of our main tools. 
In Section \ref{ones} we prove our first abstact result; it refers to the algebra of symbols.
To get familiar statements, refering to pseudodifferential operators, one applies Hilbert space representations to this abstract result; this is done in Section \ref{onces}.

In previous articles many examples of configurational anisotropy have been given; most of them can be adapted directly to phase-space anisotropy. Actually this adaptation work was performed in \cite{Ma2} for results concerning the essential spectrum and they are equally relevant for localization and non-propagation estimates. So, to avoid repetitions, we are not goind to indicate examples here.

\section{A short review of previous results}\label{smectrak}

As we said in the Introduction, we are interested in estimates of the form (\ref{heinz}). After some preliminary previous results contained in \cite{DS}, such estimates have been obtained in \cite{AMP} for Schr\"odinger operators $H:=-\Delta+V$\,, where $\Delta$ is the Laplace operator and $V$ is the potential\,.
Thus in suitable units $H$ is the Hamiltonian of a non-relativistic particle moving in $\mathbb R^n$ in the presence of the potential $V$ and "localization" or "non-propagation" refers to this physical system. In \cite{MPR2} and \cite{LMR} the results were significantly extended to certain pseudodifferential operators with variable magnetic fields. 

Leaving the magnetic fields apart, for simplicity, the Hamiltonians have now the form $H=\Op(f)$\,, being defined as the Weyl quantization of some real symbol $f$ defined in phase-space $\Xi:=\mathbb R^n\times(\mathbb R^n)^*$\,. The order of the elliptic symbol $f$ (in H\"ormander sense) is strictly positive, so one has 
$\lim_{\xi\to\infty}f(x,\xi)=\infty$ and the behavior in $x\in\mathbb R^n$ is modelled by a $C^*$-algebra of bounded, uniformly continuous functions on $\mathbb R^n$\,. So the symbols defining the operators are still confined to the restricted configuration space anisotropy. 

To be more precise, to suitable functions $h$ defined on the phase space $\Xi\,$, one assigns operators acting on functions $u:\X:=\mathbb R^n\to\mathbb C$ by
\begin{equation}\label{op}
\left[\mathfrak{Op}(h)u\right]\!(x):=(2\pi)^{-n}\!\int_\X\!\int_{\X^*}\!\!dx\,d\xi\,e^{i(x-y)\cdot
\xi}\,h\left(\frac{x+y}{2},\xi\right)u(y)\,.
\end{equation}
This is basically the Weyl quantization and, under convenient assumptions on $h$, \eqref{op} makes sense and has nice
properties in the Hilbert space $\H:=\ltwo(\X)$ or in the Schwartz space $\mathcal S(\X)$.

Let $h:\Xi\to \mathbb R$ be an elliptic symbol of strictly positive order $m$. 
It is well-known that under these assumptions $\Op(h)$ makes sense as an unbounded self-adjoint operator in $\H$,
defined on the $m$'th order Sobolev space. The problem is to evaluate the essential spectrum of this operator and to derive estimates for its functional calculus. 

The relevant information is contained in the behavior at infinity of $h$ in the $x$ variable.
This one is conveniently taken into account through an Abelian algebra $\mathscr A$ composed of uniformly continuous functions
un $\X$, which is invariant under translations (if $\varphi\in\mathscr A$ and $y\in \X$ then $\theta_y(\varphi):=\varphi(\cdot+y)\in\mathscr A$). Let us also assume (for simplicity) that $\mathscr A$ is unital and contains the ideal $C_0(\X)$ of all complex continuous functions on $\X$ which converge to zero at infinity. We also ask 
\begin{equation}\label{circu}
\left(\partial^\alpha_x\partial^\beta_\xi h\right)(\cdot,\xi)\in\mathscr A,\ \ \ \ \ \forall\,\alpha,\beta\in\mathbb N^n,\
\forall\,\xi\in\X^*.
\end{equation}
Then the function $h$ extends continuously on $\Omega\times\X^*$, where $\Omega$ is the Gelfand spectrum of the
$C^*$-algebra $\mathscr A$; this space $\Omega$ is a compactification of the locally compact space $\X$. By translational invariance of $\mathscr A$\,, it is a
compact dynamical system under an action of the group $\X$. After removing the orbit $\X$, one gets a $\X$-dynamical
system $\Omega_\infty:=\Omega\setminus\X$; its quasi-orbits (closure of orbits) contain the relevant information about the
essential spectrum of the operator $H:=\Op(h)$. For each quasi-orbit $\Q$, one constructs a self adjoint operator
$H_\Q$. It is actually the Weyl quantization of the restriction of $h$ to $\Q\times\X^*$, suitably reinterpreted. One gets finally
\begin{equation}\label{sp}
\spe(H)=\overline{\bigcup_\Q\sp(H_\Q)}\,.
\end{equation}

Many related results exist in the literature, some of them for special type of functions $h$, but with less
regularity required, others including anisotropic magnetic fields, others formulated in a more geometrical framework or referring to Fredholm properties.
We only cite \cite{ABG,CWL,Da,Ge,GI1,GI2,GI3,HM,LR,LMR,LS,Li,Ma1,MPR2,Ra,RR,RRR,RRS,RRS1}; see also references therein.
As V. Georgescu remarked \cite{GI1,GI2}, when the function $h$ does not diverge for $\xi\to\infty$\,, the approach is more difficult and should also take into account the asymptotic values taken by $h$ in "directions contained in $\X^*$". 

Now, in the framework above, we indicate the localization results. Let $H=\Op(h)$ be a Weyl pseudodifferential operator with elliptic symbol of order $m>0$\,. 
For some unital translation-invariant Abelian $C^*$-algebra $\mathscr A$ composed of uniformly continuous functions on $\X$ and containing $C_0(\X)$\,, assume that $h(x,\xi)$ is $\mathscr A$-isotropic in the variable $x$, i.e. (\ref{circu}) holds. Choose a quasi-orbit $\Q$ in the boundary $\Omega_\infty:=\Omega\setminus\X$ of the Gelfand spectrum of $\mathscr A$\,. As said above, one associates to $\Q$ a self-adjoint operator $H(\Q)$\,; its spectrum is contained (very often strictly) in the essential spectrum of $H$\,. We also fix a bounded continuous function $\rho:\mathbb R\rightarrow[0,\infty)$ whose support is disjoint from ${\rm sp}[H(\Q)]$\,.
Then for every $\epsilon>0$ there exists a neighborhood $\mathcal U$ of $\Q$ in $\Omega$ such that, setting $U:=\mathcal U\cap\X$\,,
\begin{equation}\label{frac}
\p\!\chi_U(Q)\rho(H)\!\p_{\mathbb B(\H)}\,\le\varepsilon
\end{equation}
and
\begin{equation}\label{frec}
\p\!\chi_U(Q)e^{itH}\rho(H)u\!\p_{\H}\,\le\varepsilon\p\!u\!\p_\H\,,\ \quad\forall\,t\in\mathbb R,\,u\in\H\,.
\end{equation}
We recall that $\chi_U(Q)$ is, by definition, the operator of multiplication by the function $\chi_U$ in the Hilbert space $\H=L^2(\X)$\,. Concrete examples have been indicated in \cite{AMP,MPR2}.

As remarked by V. Georgescu, a very efficient tool for obtaining some of the results cited above was the crossed product, associated to
$C^*$-dynamical systems. However, $\xi$-anisotropy cannot be treated in such a setting: the
symbols of order $0$ are not efficiently connected to the crossed products.

\section{Rieffel's pseudodifferential calculus}\label{sectra}

As a substitute for crossed products, in \cite{Ma2,Ma3} Rieffel's version of the Weyl pseudodifferential calculus has been used to investigate the essential spectrum of full phase-space anisotropic Hamiltonians. Since it will also be needed for our study of localization results, we shall recall briefly Rieffel's deformation procedure, sending to \cite{Rie1} for proofs and more details. 

Let us denote by $\X$ the vector space $\mathbb R^n$ on which, when necessary, the canonical base $(e_1,\dots,e_n)$ will be used. Its dual is denoted by $\X^*$\, with the dual base $(e_{n+1},\dots,e_{2n})$\,. Then "the phase space" $\,\Xi=\X\times\X^*$\, with points generically denoted by $X=(x,\xi),Y=(y,\eta),Z=(z,\zeta)$ 
is canonically a symplectic space with the symplectic form $[\![X,Y]\!]:=x\cdot\eta-y\cdot \xi$\,.

We start with {\it a classical data}, which is by definition a quadruplet $\left(\A,\Theta,\Xi,[\![\cdot,\cdot,]\!]\right)$\,, where $\A$ is a $C^*$-algebra and
a continuous action $\Theta$ of $\Xi$ by automorphisms of $\A$ is also given. For $(f,X)\in\A\times\Xi\,$ we are going to use the notations $\Theta(f,X)=\Theta_X(f)\in\A$ for the $X$-transformed of the element $f$\,. The function $\Theta$ is assumed to be continuous and the automorphisms $\Th_X,\Th_Y$ satisfy 
$\Th_X\circ\Th_Y=\Th_{X+Y}$ for all $X,Y\in\Xi$\,.

Let us denote by $\A^\infty$ the vector space of all smooth elements $f$ under $\Th$\,, those for which the mapping $\Xi\ni X\mapsto\Th_X(f)\in\A$ is $C^\infty$ in norm; it is a dense $^*$-algebra of $\A$. It is also a Fr\'echet $^*$-algebra for the family of semi-norms
$$
\p\!f\!\p_\A^{(k)}\,:=\sum_{|\mu|\le k}\frac{1}{\mu!}
\big\Vert\partial_X^\mu\big(\Theta_X(f)\big)\big|_{X=0}\big\Vert_\A\,,\qquad k\in\N\,.
$$
In the sequel we are going to use the abbreviations $\D^\mu f:=\partial_X^\mu\big(\Theta_X(f)\big)\big|_{X=0}$ for all the multi-indices $\mu\in\mathbb N^{2n}$\,. All the operators $\D^\mu$ are well-defined, linear and continuous on the Fr\'echet $^*$-algebra $\A^\infty$\,.

Then one introduces on $\A^\infty$ the product
\begin{equation}\label{rodact}
f\,\#\,g:=\pi^{-2n}\!\int_\Xi\int_\Xi dYdZ\,e^{2i[\![Y,Z]\!]}\,\Theta_Y(f)\,\Theta_Z(g)\,,
\end{equation}
rigorously defined as an oscillatory integral. There are several ways to give a meaning to this kind of expression \cite{Rie1}. The most useful for us
is in terms of a regular partition of unity of $\Xi$\,. 
Let $\L$ be a lattice in $\Xi$\,, pick a non-trivial positive, smooth, compactly supported function $\psi$ on $\Xi$ such that 
$$
\Psi(X):=\sum_{P\in\L}\psi(X-P)>0\,,\quad\ \forall\,X\in\Xi
$$ 
and set $\psi_0:=\psi/\Psi$ and $\psi_P(\cdot):=\psi(\cdot-P)$ for all $P\in\L$\,. Then $\{\psi_P\!\mid\!P\in\L\}$ will be a locally finite partition of unity on $\Xi$\,. It can be shown that the infinite sum
\begin{equation}\label{joseph}
f\# g= \pi^{-2n}\sum_{P,Q\in\L} \int_\Xi \!\int_\Xi \!dYdZ\,e^{2i[\![Y,Z]\!]}\,\psi_P(Y)\,\psi_Q(Z)\,\Theta_{Y}(f)\,\Theta_{Z}(g)
\end{equation}
converges absolutely\,.

To complete the algebraical structure, we keep the same involution $^*$\,; one gets a $^*$-algebra $\big(\A^\infty,\#,^*\big)$\,. This $^*$-algebra admits a $C^*$-completion $\AA$ in a $C^*$-norm $\Vert\cdot\Vert_{\AA}$ which is defined by Hilbert module techniques. Since the construction is rather involved and it will not play an explicit role for us, we only refer to \cite[Ch. 4]{Rie1} for the details and justifications.

The deformation can be extended to morphisms, giving rise to a covariant functor. Let $\left(\A_j,\Theta_j,\Xi,[\![\cdot,\cdot,]\!]\right)$\,, $j=1,2$ be two classical data and let $\R:\A_1\to\A_2$ be a $\Xi$-morphism, \ie a ($C^*$-)morphism intertwining the two actions: 
$$
\R\circ\Theta_{1,X}=\Theta_{2,X}\circ\R\,,\ \quad \forall\,X\in\Xi\,.
$$ 
Then $\R$ sends $\A_1^\infty$ into $\A_2^\infty$ and extends to a morphism $\RR:\AA_1\to\AA_2$ that also intertwines the corresponding actions. 

For us, the main property of this functor is that it preserves short exact sequences of $\Xi$-morphisms. Let $\J$ be a (closed, self-adjoint, two-sided) invariant ideal in $\A$ and denote by $\JJ$ its deformation, using the procedure indicated above. Then $\JJ$ is isomorphic (and will be identified) with an invariant ideal in $\AA$\,.
In addition, on the quotient $\A/\J$ there is a natural quotient action of $\Xi$\,, so we can perform its Rieffel deformation. This one is canonically isomorphic to the quotient $\AA/\JJ$ \,.

If $h\in\AA$, the spectrum of its canonical image in the quotient $C^*$-algebra $\AA/\JJ$ will be denoted by $\sp_\JJ(h)$\,. Later we are going to need

\begin{Lemma}\label{sigfrid}
Let $\rho:\mathbb R\rightarrow\mathbb R_+$ a bounded continuous function. If $\,h\in\AA$ and ${\rm supp}(\rho)\cap{\rm sp}_\JJ(h)=\emptyset$\,, then $\rho(h)\in\JJ$ \,. If $h\in\AA^\infty$ and ${\rm supp(\rho)}\cap{\rm sp}_\JJ(h)=\emptyset$\,, then $\rho(h)\in\JJ^\infty$\,.
\end{Lemma}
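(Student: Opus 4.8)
The plan is to treat the two assertions separately but in parallel, the first at the $C^*$-level and the second at the Fr\'echet level, the latter reducing to the former once a smoothness argument is in place. For the first assertion, let $q:\AA\to\AA/\JJ$ denote the quotient morphism. By definition of $\sp_\JJ(h)=\sp\big(q(h)\big)$, the hypothesis ${\rm supp}(\rho)\cap\sp_\JJ(h)=\emptyset$ means precisely that $\rho$ vanishes on a neighborhood of $\sp\big(q(h)\big)$. The key structural fact I would invoke is that the continuous functional calculus is compatible with morphisms: for any continuous $\rho$ (here bounded, and we may assume $\rho$ real-valued since $\rho:\mathbb R\to\mathbb R_+$) one has $q\big(\rho(h)\big)=\rho\big(q(h)\big)$ in $\AA/\JJ$. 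Since $\rho$ vanishes on $\sp\big(q(h)\big)$, the spectral mapping theorem gives $\rho\big(q(h)\big)=0$, hence $\rho(h)\in\ker q=\JJ$. This is the whole of the first statement.

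For the second assertion, the extra input is that $\rho(h)$ is smooth for the action $\Th$ when $h$ is. First I would recall that if $h\in\AA^\infty$, then $h$ is a smooth element, and because each $\Th_X$ is a $^*$-automorphism of $\AA$ one has $\Th_X\big(\rho(h)\big)=\rho\big(\Th_X(h)\big)$ by uniqueness of continuous functional calculus; from this and a standard argument (differentiating the map $X\mapsto\rho(\Th_X(h))$, e.g. via a Cauchy-integral representation of $\rho$ applied to a holomorphic extension on a neighborhood of $\sp(h)$, or via the known fact that $\AA^\infty$ is stable under holomorphic and indeed under smooth functional calculus) one concludes $\rho(h)\in\AA^\infty$. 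Now combine this with the first part: $\rho(h)\in\JJ$ and $\rho(h)\in\AA^\infty$. It remains to check that $\JJ\cap\AA^\infty=\JJ^\infty$, i.e. that an element of $\JJ$ which is smooth as an element of $\AA$ is already smooth as an element of $\JJ$ — but this is immediate, since $\JJ$ is an invariant ideal, the action on $\JJ$ is the restriction of the action on $\AA$, and smoothness of the orbit map is an intrinsic statement not depending on the ambient algebra. Hence $\rho(h)\in\JJ^\infty$.

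I expect the only genuinely delicate point to be the stability of $\AA^\infty$ under the functional calculus by a merely smooth (not holomorphic, not even compactly supported) function $\rho$. For holomorphic functional calculus the invariance of $\AA^\infty$ is classical (Fr\'echet algebras stable under holomorphic calculus), but here $\rho$ is only continuous, and real-valued. The clean way around this is to exploit the hypothesis: since ${\rm supp}(\rho)\cap\sp_\JJ(h)=\emptyset$ but we still need to control $\rho(h)$ near the full spectrum $\sp(h)$, I would not try to smooth $\rho$ globally; instead I would note that what is actually needed is only that $X\mapsto\Th_X(\rho(h))=\rho(\Th_X(h))$ is $C^\infty$ in norm, and since $\Th$ is a strongly continuous action by isometric automorphisms with $h$ smooth, the map $X\mapsto\Th_X(h)$ is $C^\infty$ into $\AA$, so it suffices that $g\mapsto\rho(g)$ be, say, $C^\infty$ in a suitable sense on a norm-neighborhood of $h$ in $\AA$ — which again reduces to smoothness of the functional calculus and is presumably exactly the place where one must either cite a result of Rieffel or of the earlier papers \cite{Ma2,Ma3}, or insert a short lemma. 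If the paper has a cleaner route (for instance restricting attention to $\rho\in C_c^\infty$ via an approximation reducing \eqref{frac}–\eqref{frec} to that case), that would sidestep this obstacle entirely; otherwise the smoothness-stability of $\AA^\infty$ under smooth functional calculus is the one step I would flag as requiring care.
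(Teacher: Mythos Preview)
Your argument is essentially the paper's: the first assertion is exactly the content of \cite[Lemma~1]{AMP} (which the paper simply cites without reproducing your quotient-morphism computation), and for the second the paper, like you, observes that $\JJ$ is $\Theta$-invariant so that $\JJ^\infty=\AA^\infty\cap\JJ$ and concludes. The smoothness-stability point you flag --- that $\rho(h)\in\AA^\infty$ for merely continuous $\rho$ --- is not addressed in the paper's two-line proof; it is simply taken for granted.
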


\begin{proof}
The first assertion is a minor variation of \cite[Lemma 1]{AMP}. It holds for every closed bi-sided self-adjoint ideal of a $C^*$-algebra.
The second assertion follows from the first one; $\JJ$ is invariant under the action $\Theta$ and clearly $\JJ^\infty=\AA^\infty\cap\JJ$\,.
\end{proof}

Actually we are interested in deforming Abelian $C^*$-algebras. Let $(\Si,\Th,\Xi)$ be a topological dynamical system with group $\Xi=\mathbb R^{2n}$\,. This means that $\Si$ is a locally compact space, $\Th:\Si\times\Xi\rightarrow\Si$ is a continuous map and, using notations as
$$
\Th(\si,X)=:\Th_X(\si)=\Th_\si(X)\,,\ \quad\forall\,X\in\Xi,\,\si\in\Si\,,
$$
each $\Th_X:\Si\rightarrow\Si$ is a homeomorphism and one has $\Th_X\circ\Th_Y=\Th_{X+Y}$ for all $X,Y\in\Xi$\,. One denotes by $\B(\Si)$ the $C^*$-algebra of all bounded complex functions on $\Si$ with pointwise multiplication, complex conjugation and the obvious norm $\p\!f\!\p_\infty:=\sup_\si|f(\si)|$\,.
The action $\Theta$ of $\Xi$ on $\Sigma$ induces an action of $\Xi$ on $\B(\Si)$ (also denoted by $\Theta$) given by $\Theta_X(f):=f\circ\Theta_X$\,.
In general this action fails to have good continuity or smoothness properties, so we introduce
\begin{equation}\label{gunther}
\B_\Th(\Si):=\{f\in\B(\Si)\mid \Xi\ni X\mapsto\Th_X(f)\in \B(\Si)\ \,{\rm is\ norm-continuous}\}
\end{equation}
and
\begin{equation}\label{gunther}
\B^\infty_\Th(\Si):=\{f\in\B(\Si)\mid \Xi\ni X\mapsto\Th_X(f)\in \B(\Si)\ \,{\rm is\ C^\infty\ in\ norm}\}\,.
\end{equation}

We also denote by $\C_0(\Sigma)$ the $C^*$-algebra of all complex continuous functions $f$ on $\Sigma$ such that for
any $\varepsilon>0$ there is a compact subset $K$ of $\Si$ such that $|f(\si)|\le\varepsilon$ if $\si\notin K$\,.
Notice that $\C_0(\Si)$ is a $C^*$-subalgebra of $\B_\Th(\Si)$\,, but not an ideal in general.  When $\Sigma$ is compact, $\C(\Si)$ is unital. 
The action $\Theta$ of $\Xi$ on $\Sigma$ induces an action on $\C_0(\Si)$\,; we denote by $\C_0^\infty(\Sigma)$ the set of smooth elements.
The Rieffel deformations of $\B_\Th(\Si)$ and $\C_0(\Sigma)$ will be denoted, respectively, by $\BB_\Th(\Si)$ and $\CC_0(\Sigma)$\,. The deformation procedure can be applied to any $C^*$-subalgebra of $\B_\Th(\Si)$ that is invariant under the action $\Th$\,.

Later on we shall need the following smoothing procedure. For $\varphi\in C_{{\rm c}}^\infty(\Xi)$ and $g\in\B(\Si)$ one sets 
\begin{equation}\label{conv}
g^\varphi\equiv\varphi\ast_\Theta g:=\int_\Xi dY\varphi(Y)\,\Theta_{-Y}(g)\,.
\end{equation}
If the action $\Theta$ consists in translations: $\left[\Theta_Y(f)\right](X):=f(X+Y)$\,, then $\ast_\Theta$ coincides with the usual convolution. 
In this case $g^\varphi\in{{\rm BC}}^\infty(\Si)={{\rm BC}}_{{\rm u}}(\Si)^\infty$ and ${\rm supp}(g^\varphi)\subset{\rm supp}(g)+{\rm supp}(\varphi)$\,.
We are going to need the next more general statement.

\begin{Lemma}\label{siegelinda}
\begin{enumerate}
\item
One has $g^\varphi\in\B_\Th^\infty(\Si)$\,. For every multi-index $\alpha\in\mathbb N^{2n}$ one has $\D^\alpha g^\varphi=g^{\partial^\alpha\varphi}$\,.
\item
One has ${\rm supp}(g^\varphi)\subset\Th_{{\rm supp}(\varphi)}[{\rm supp}(g)]$\,. 
\end{enumerate}
\end{Lemma}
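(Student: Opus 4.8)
The plan is to push all the analytic work onto the scalar function $\varphi$, via the elementary remark that, for a fixed $g\in\B(\Si)$, the assignment $\psi\mapsto g^\psi:=\int_\Xi\psi(Y)\,\Th_{-Y}(g)\,dY$ is linear from $C_{{\rm c}}(\Xi)$ into $\B(\Si)$ and satisfies $\|g^\psi\|_\infty\le\|g\|_\infty\int_\Xi|\psi(Y)|\,dY$, since $|g^\psi(\si)|=\big|\int_\Xi\psi(Y)\,g(\Th_{-Y}\si)\,dY\big|\le\|g\|_\infty\int_\Xi|\psi(Y)|\,dY$ for every $\si\in\Si$. In particular $g^\varphi\in\B(\Si)$, and convergence of functions in $C_{{\rm c}}(\Xi)$ with a common compact support (hence in $L^1(\Xi)$) is turned into norm convergence of the associated elements of $\B(\Si)$; this reduction will be used throughout.

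For part (1) I would first record the transformation law $\Th_X(g^\varphi)=g^{\varphi(X+\,\cdot\,)}$ for all $X\in\Xi$, which follows at once from $\Th_{-Y}\circ\Th_X=\Th_{X-Y}$ and the substitution $Y\mapsto Y+X$. Thus $X\mapsto\Th_X(g^\varphi)$ is the composition of the translation map $\Xi\ni X\mapsto\varphi(X+\,\cdot\,)$ with the bounded linear map $\psi\mapsto g^\psi$, and it is enough to check that $X\mapsto\varphi(X+\,\cdot\,)$ is $C^\infty$ from $\Xi$ into $L^1(\Xi)$ with $\partial_X^\alpha\big(\varphi(X+\,\cdot\,)\big)=(\partial^\alpha\varphi)(X+\,\cdot\,)$. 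This is routine: by Taylor's formula the difference quotients converge uniformly to the expected derivatives, all functions involved remain supported in one fixed compact set when $X$ ranges over a bounded neighbourhood of $0$, so uniform convergence becomes $L^1$ convergence; and continuity of the derivatives is just continuity of translations in $L^1$. Feeding this through the estimate above shows that $X\mapsto\Th_X(g^\varphi)$ is $C^\infty$ in norm with $\partial_X^\alpha\big(\Th_X(g^\varphi)\big)=g^{(\partial^\alpha\varphi)(X+\,\cdot\,)}$; hence $g^\varphi\in\B_\Th^\infty(\Si)$ and, evaluating at $X=0$, $\D^\alpha g^\varphi=g^{\partial^\alpha\varphi}$.

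For part (2) the pointwise inclusion is immediate: if $g^\varphi(\si)\neq0$, then $\varphi(Y)\,g(\Th_{-Y}\si)\neq0$ for some $Y$, so $Y\in{\rm supp}(\varphi)$ and $\Th_{-Y}\si\in{\rm supp}(g)$, whence $\si=\Th_Y(\Th_{-Y}\si)\in\Th_{{\rm supp}(\varphi)}[{\rm supp}(g)]$; thus $\{g^\varphi\neq0\}\subset\Th_{{\rm supp}(\varphi)}[{\rm supp}(g)]$. It then remains to see that the right-hand side is \emph{closed}, so that it also contains ${\rm supp}(g^\varphi)$; this is the one point that needs genuine care. I would use that $(Y,\tau)\mapsto(Y,\Th_Y\tau)$ is a homeomorphism of $\Xi\times\Si$, so it maps the closed set ${\rm supp}(\varphi)\times{\rm supp}(g)$ onto a closed subset of $\Xi\times\Si$ lying inside the tube $K\times\Si$ with $K:={\rm supp}(\varphi)$ compact; since projection along a compact factor is a closed map and the image of that set under $\Xi\times\Si\to\Si$ is exactly $\Th_{{\rm supp}(\varphi)}[{\rm supp}(g)]$, closedness follows. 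I expect no real difficulty beyond this: the computations are routine, and the decisive ingredient in (2) is precisely the compactness of ${\rm supp}(\varphi)$ (the statement would fail for a merely closed $\varphi$-support), while the only other delicate bookkeeping is the common-compact-support argument that legitimizes the differentiation under the integral sign in (1).
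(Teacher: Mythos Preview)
Your proof is correct and follows essentially the same route as the paper: for part~(1) both arguments rest on the identity $\Th_X(g^\varphi)=g^{\varphi(X+\,\cdot\,)}$ followed by differentiation under the integral sign (you frame it via the bounded linear map $\psi\mapsto g^\psi$, the paper invokes dominated convergence), and for part~(2) both show first that $\Th_{{\rm supp}(\varphi)}[{\rm supp}(g)]$ is closed and then verify the pointwise inclusion. Your closedness argument via the homeomorphism of $\Xi\times\Si$ and the closed-projection lemma is more explicit than the paper's ``it follows easily'', but the underlying idea is the same.
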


\begin{proof}
By a change of variables one easily gets 
$$
\Th_X\!\left(g^\varphi\right)=g^{\mathcal T_{X}\varphi}\,,\quad\ {\rm where}\quad (\mathcal T_X\varphi)(Y):=\varphi(Y+X)\,.
$$ 
This and a standard application of the Dominated Convergence Theorem lead easily to the statement 1.

Now we show 2. Since $\Th$ is continuous, ${\rm supp}(\varphi)$ is compact in $\Xi$ and ${\rm supp}(g)$ is closed in $\Si$\,, it follows easily that $\Th_{{\rm supp}(\varphi)}[{\rm supp}(g)]$ is closed in $\Si$\,. Let $\si\notin\Th_{{\rm supp}(\varphi)}[{\rm supp}(g)]$\,; then there exists a neighborhood $V$ of $\si$ such that
$V\cap\Th_{{\rm supp}(\varphi)}[{\rm supp}(g)]=\emptyset$\,. For each $\si'\in V$ one has
$$
[\varphi\ast_\Theta g](\si')=\int_{{\rm supp}(\varphi)}\!\!dY\varphi(Y)g[\Theta_{-Y}(\si')]
$$
and if $Y\in{\rm supp}(\varphi)$ then $\Theta_{-Y}(\si')\notin{\rm supp}(g)$\,. This shows that $V$ is disjoint from ${\rm supp}(g^\varphi)$\,.
\end{proof}

An important example to which Rieffel deformation apply is given by {\it$\Xi$-algebras}, \ie $C^*$-algebras $\B$ composed of bounded, uniformly continuous function on $\Xi$\,, under the additional assumption that the action $\mathcal T$ of $\,\Xi$ on itself by translations, raised to functions, leaves $\B$ invariant. Let us denote by $\Si$ the Gelfand spectrum of $\mathcal B$\,. By Gelfand theory, there exists a continuous function $:\Xi\mapsto\Sigma$ with dense image, which is equivariant with respect to the actions
$\mathcal T$ on $\Xi$, respectively $\Theta$ on $\Sigma$. The function is injective if and only if $\C_0(\Xi)\subset\B$\,.

The largest such $C^*$-algebra $\B$ is $\BC_{{\rm u}}(\Xi)$\,, consisting of all the bounded uniformly continuous functions $:\Xi\mapsto\mathbb C$. It coincides with the family of functions $g\in\BC(\Xi)$ (just bounded and continuous) such that
$$
\Xi\ni X\mapsto g\circ\mathcal T_X=g(\cdot+X)\in\BC(\Xi)
$$
is continuous. Then the Fr\'echet $^*$-algebra of $C^\infty$-vectors is
$$
\BC_{{\rm u}}(\Xi)^\infty\equiv\BC^\infty(\Xi):=\{f\in C^\infty(\Xi)\mid |\left(\partial^\alpha
f\right)(X)|\le C_\alpha\,,\,\forall\,\alpha,X\}\,.
$$
Another important particular case is $\B=\C_0(\Xi)$ (just put $\Sigma=\Xi$ in the general construction).
It is shown in \cite{Rie1} that at the quantized level one gets the usual Weyl calculus and the emerging non-commutative
$C^*$-algebra $\CC_0(\Xi)$ is isomorphic to the ideal of all compact operators on an infinite-dimensional separable Hilbert space.

\section{Localization in the symbolic calculus}\label{ones}

We are given a topological dynamical system $(\Si,\Th,\Xi)$\, to which we associate, as in section \ref{sectra}, the Abelian $C^*$-algebras $\B_\Th(\Si)$ and $\C_0(\Si)$ as well as their Rieffel deformations $\BB_\Th(\Si)$ and $\CC_0(\Si)$\,. Recall that, with respect to the canonical basis $(e_1,..,e_{2n})$ of $\,\Xi$\,, one defines the higher-order partial derivatives $\D^\mu f:=\partial^\mu[\Th_X(f)]_{X=0}$\,, where $\mu$ is a multi-index and $f\in\B^\infty_\Th(\Si)$\,.  Recall also the form of the seminorms of the Fr\'echet space $\B_\Th^\infty(\Si)$
\begin{equation}\label{medel}
\p\!f\!\p_{\B_{\Th}(\Si)}^{(j)}\,:= \sum_{|\mu| \le j}\frac{1}{\mu!} \!\p\!\D^\mu f\!\p_{\B_\Th(\Si)}\,\equiv\sum_{|\mu| \le j}\frac{1}{\mu!} \!\p\!\D^\mu f\!\p_\infty\,,
\end{equation}
and of the Fr\'echet space $\mathfrak B_\Th^\infty(\Si)$
\begin{equation}\label{orellana}
\p\!f\!\p_{\mathfrak B_\Th(\Si)}^{(j)}\,:= \sum_{|\mu| \le j}\frac{1}{\mu!} \!\p\!\D^\mu f\!\p_{\mathfrak B_\Th(\Si)}.
\end{equation}
We fix a closed invariant set $F\subset\Si$\,; invariance means that $\Th_X(F)\subset F$ for every $X\in\Xi$\,. Then 
$$
\C_0(\Si)^F:=\{f\in\C_0(\Si)\!\mid f|_F=0\}
$$ 
is an invariant ideal of $\C_0(\Si)$\,; its Rieffel quantization $\CC_0(\Si)^F$ is identified to an ideal of  $\CC_0(\Si)$\,. As explained above, the quotient $\CC_0(\Si)/\CC_0(\Si)^F$ can be regarded as the deformation of the Abelian quotient $\C_0(\Si)/\C_0(\Si)^F$\,, which in its turn can be identified with $\C(F)$\,, the $C^*$-algebra of all continuous functions on the compact space $F$\,. Along these lines, we identify $\CC_0(\Si)/\CC_0(\Si)^F$ with the Rieffel quantization $\CC(F)$ of $\C(F)$\,.

Let us denote by $\varphi\in C^\infty_{{\rm c}}(\Xi)_+^{\rm n}$ the family of all positive functions $\varphi\in C^\infty_{{\rm c}}(\Xi)$ which satify the normalization condition $\int_\Xi\varphi=1$\,. If $\W\subset\Si$ is an open (or closed) set, the function 
$$
\chi_\mathcal W^\varphi=\varphi\ast_\Th\chi_{\mathcal W}=\int_\Xi\!dY\,\varphi(Y)\,\chi_{\Th_Y(\W)}
$$ 
belongs to $\B_\Th^\infty(\Si)$ by Lemma \ref{siegelinda} and one has 
\begin{equation}\label{balaur}
{\supp}\!\left(\chi_\mathcal W^\varphi\right)\subset\Th_{{\rm supp}(\varphi)}\!\left[{\rm supp}(\chi_\W)\right]=\Th_{{\rm supp}(\varphi)}(\W)\,.
\end{equation} 
Notice that in general the characteristic function $\chi_\W$ is not an element of $\B_\Th(\Si)$\,.

Let us also fix a basis of open neighborhoods $\mathcal N_F$ of $F$ in the space $\Si$\,. 

\begin{Theorem}\label{grigore}
Let $h\in\CC_0^\infty(\Si)$ and $\rho:\mathbb R\rightarrow [0,\infty)$ a continuous function with support disjoint from the spectrum of $h^F:=h|_F$ computed in the non-commutative $C^*$-algebra $\CC(F)$\,. For any $\varphi\in C^\infty_{{\rm c}}(\Xi)_+^{\rm n}$\,, $\varepsilon>0$ and $k \in \mathbb{N}$\,, there exists $\mathcal W\in\mathcal N_F$ such that 
\begin{equation}\label{pinto}
\big\Vert\,\chi_\mathcal W^\varphi\# \rho(h)\,\big\Vert_{\mathfrak B_{\Th}(\Si)}^{(k)}\le\varepsilon\,.
\end{equation}
\end{Theorem}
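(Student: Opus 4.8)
The plan is to strip away the non-commutative and analytic layers one at a time, reducing the seminorm bound \eqref{pinto} to a uniform sup-norm estimate for Rieffel products of the special shape $\chi_\W^\psi\#g'$, and then to prove that estimate from the absolutely convergent lattice expansion \eqref{joseph} of $\#$ together with the support property in Lemma \ref{siegelinda}(2). Concretely, first apply Lemma \ref{sigfrid} to the $\Th$-invariant ideal $\JJ:=\CC_0(\Si)^F$ of $\CC_0(\Si)$ — whose quotient is identified with $\CC(F)$, the image of $h$ being $h^F$ — to get $\rho(h)\in\JJ^\infty=\CC_0^\infty(\Si)\cap\JJ$; thus $\rho(h)$, read as a function on $\Si$, is continuous, vanishes at infinity and vanishes on $F$, and by invariance of $\JJ$ the same holds for each $\D^\nu\!\big(\rho(h)\big)$. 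Since every $\D_j$ is a derivation for $\#$ (differentiate $\Th_X(f\#g)=\Th_X(f)\#\Th_X(g)$ at $X=0$) and the $\D_j$ commute, Lemma \ref{siegelinda}(1) gives
$$
\D^\mu\!\big(\chi_\W^\varphi\#\rho(h)\big)=\sum_{\nu\le\mu}\binom{\mu}{\nu}\,\chi_\W^{\partial^\nu\varphi}\,\#\,\D^{\mu-\nu}\!\big(\rho(h)\big)\,.
$$
Using in addition that the deformed $C^*$-norm, restricted to $\B_\Th^\infty(\Si)$, is dominated by a fixed Fr\'echet seminorm of $\B_\Th(\Si)$, i.e. $\p\!a\!\p_{\mathfrak B_\Th(\Si)}\le C\,\p\!a\!\p_{\B_\Th(\Si)}^{(N)}$ with $C,N$ independent of $a$ (part of the construction of $\p\!\cdot\!\p_{\mathfrak B_\Th(\Si)}$, \cite[Ch.~4]{Rie1}), one bounds $\p\!\chi_\W^\varphi\#\rho(h)\!\p_{\mathfrak B_\Th(\Si)}^{(k)}$ by a finite positive combination of sup-norms $\p\!\chi_\W^\psi\#g'\!\p_\infty$, where each $\psi$ is a fixed partial derivative of $\varphi$ (so $\supp\psi\subseteq\supp\varphi$) and each $g'$ is one of finitely many fixed functions in $\CC_0^\infty(\Si)$ vanishing on $F$ and at infinity. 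Hence it suffices to prove: for every $\psi\in C^\infty_{\rm c}(\Xi)$ and every $g'\in\CC_0^\infty(\Si)$ with $g'|_F=0$, $\p\!\chi_\W^\psi\#g'\!\p_\infty\to0$ as $\W$ decreases through $\mathcal N_F$.

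For this I would use two observations. \emph{(a)} Since $\chi_\W^\psi(\tau)=\int_\Xi\psi(Y)\,\chi_\W\!\big(\Th_{-Y}\tau\big)\,dY$, if $K\subseteq\Si$ is compact and $K\cap F=\emptyset$ then $\chi_\W^\psi\equiv0$ on $K$ whenever $\W\cap\Th_{-\supp\psi}(K)=\emptyset$; as $\Th_{-\supp\psi}(K)$ is compact and, by invariance of $F$, disjoint from $F$, there is $\W_0(K)\in\mathcal N_F$ with $\chi_\W^\psi|_K\equiv0$ for all $\W\in\mathcal N_F$, $\W\subseteq\W_0(K)$. \emph{(b)} $\p\!\D^\alpha\chi_\W^\psi\!\p_\infty=\p\!\chi_\W^{\partial^\alpha\psi}\!\p_\infty\le\p\!\partial^\alpha\psi\!\p_{L^1(\Xi)}$, uniformly in $\W$. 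Now expand $\chi_\W^\psi\#g'=\pi^{-2n}\sum_{P,Q\in\L}T^\W_{P,Q}$ as in \eqref{joseph}. Integration by parts in $e^{2i[\![Y,Z]\!]}$ (non-degeneracy of $[\![\cdot,\cdot]\!]$, observation (b), boundedness of the $\D^\beta g'$) gives $\p\!T^\W_{P,Q}\!\p_\infty\le C_M(1+|P|)^{-M}(1+|Q|)^{-M}$ with $C_M$ \emph{independent of} $\W$, so for $\varepsilon>0$ a finite $S\subset\L\times\L$ absorbs the tail: $\pi^{-2n}\sum_{(P,Q)\notin S}\p\!T^\W_{P,Q}\!\p_\infty<\varepsilon/2$ for every $\W$. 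For the finitely many $(P,Q)\in S$: since $g'$ vanishes at infinity and on $F$, choose $\delta>0$ (small, depending only on $S$ and $\psi$) and a compact $L_\delta\subseteq\Si\setminus F$ with $|g'|<\delta$ off $L_\delta$; then for a point $\sigma$, either $\Th_Z\sigma\notin L_\delta$ for all $Z\in\supp\psi_Q$, forcing $|T^\W_{P,Q}(\sigma)|\le\p\!\psi_P\!\p_{L^1}\p\!\psi_Q\!\p_{L^1}\p\!\psi\!\p_{L^1}\,\delta$, or $\sigma\in\Th_{-\supp\psi_Q}(L_\delta)$, so $\Th_Y\sigma$ stays in the compact set $K_{P,Q,\delta}:=\Th_{\supp\psi_P}\!\big(\Th_{-\supp\psi_Q}(L_\delta)\big)\subseteq\Si\setminus F$ for $Y\in\supp\psi_P$, and (a) makes $\chi_\W^\psi$ vanish there for $\W$ small, whence $T^\W_{P,Q}(\sigma)=0$. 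Taking $\W^*\in\mathcal N_F$ inside all the $\W_0(K_{P,Q,\delta})$, $(P,Q)\in S$, yields $\p\!\chi_\W^\psi\#g'\!\p_\infty<\varepsilon$ for $\W\in\mathcal N_F$, $\W\subseteq\W^*$, which is the asserted convergence; collecting the finitely many such $\W^*$ coming from the reduction proves the theorem.

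The \textbf{main difficulty} is this last claim: because $\#$ is non-local, $\chi_\W^\psi\#g'$ does not vanish merely when $\supp\chi_\W^\psi$ and $\supp g'$ become disjoint, and there is no semiclassical parameter to produce smallness. The device above is to use \eqref{joseph} to peel off a finite, genuinely "local" part — controlled uniformly in $\W$ by the standard oscillatory-integral estimate — and then to exploit that $\chi_\W^\psi$, built from $\chi_\W$ with $\W$ a shrinking neighbourhood of $F$, eventually vanishes on any fixed compact disjoint from $F$, while $g'$, lying in the ideal attached to $F$ by Lemma \ref{sigfrid}, is uniformly small off such a compact. The remaining ingredients — Lemma \ref{sigfrid} itself, the derivation/Leibniz property of the $\D_j$, and the domination of the deformed $C^*$-norm by undeformed Fr\'echet seminorms — are routine.
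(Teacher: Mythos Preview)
Your argument is correct, and the key engine --- the absolutely convergent lattice expansion \eqref{joseph}, uniform-in-$\W$ tail control via oscillatory-integral decay, and the domination of the deformed $C^*$-norm by undeformed Fr\'echet seminorms --- is exactly what the paper uses. The organization, however, differs in a way worth recording.

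The paper proceeds in three layers: first a pointwise-product estimate $\|\chi_\U^\varphi\,f\|_{\B_\Th(\Si)}^{(j)}\le\varepsilon$ for $f\in\C_0^\infty(\Si)^F$ (Proposition~\ref{effort}); then the deformed-product estimate $\|\chi_\V^\varphi\#f\|_{\B_\Th(\Si)}^{(j)}\le\varepsilon$ (Proposition~\ref{performardo}), obtained from \eqref{joseph} by reducing the finitely many surviving $(P,Q)$-terms to the pointwise case; then a seminorm change (Proposition~\ref{pdeformado}) invoking the full equivalence of the families \eqref{medel} and \eqref{orellana} from \cite[Ch.~7]{Rie1}. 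Lemma~\ref{sigfrid} is applied at the very end.

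You invert the order: Lemma~\ref{sigfrid} comes first; then the Leibniz rule for $\#$ (legitimate since each $\Th_X$ is an automorphism of the deformed algebra) together with the \emph{one-sided} bound $\|a\|_{\mathfrak B_\Th(\Si)}\le C\|a\|_{\B_\Th(\Si)}^{(N)}$ reduces everything to sup-norms of products $\chi_\W^\psi\#g'$; finally you estimate these directly from \eqref{joseph} by a pointwise dichotomy on $\sigma$ via the compact $L_\delta\subset\Si\setminus F$, without passing through the pointwise product $\chi_\U^\varphi\cdot f$ at all. This buys you two things: you never need the separate Proposition~\ref{effort}, and you only use the easy direction of the seminorm comparison (available already in \cite[Ch.~4]{Rie1}) rather than the deeper equivalence from \cite[Ch.~7]{Rie1} that the paper cites. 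The paper's route, on the other hand, isolates the commutative step cleanly and makes the passage from $\cdot$ to $\#$ explicit, which is conceptually pleasant. Both arguments hinge on the same non-locality obstacle you identify, and resolve it the same way: uniform tail control plus exact vanishing of $\chi_\W^\psi$ on any fixed compact disjoint from $F$ once $\W$ is small enough.
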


\begin{Remark}\label{sarpe}
{\rm The Theorem is our main abstract localization result, expressed in terms of the symbolic calculus defined by Rieffel's deformation. Note that it contains a rich amount of information, involving all the seminorms $\p\!\cdot\!\p_{\mathfrak B_{\Th}(\Si)}^{(k)}$\,; for $k=0$ one gets the norm of the $C^*$-algebra $\mathfrak B_{\Th}(\Si)$\,. It will be turned into an assertion about pseudodifferential operators in the next sections.}
\end{Remark}

\begin{Remark}\label{eduardo}
{\rm It is clear that $1-\chi^\varphi_\W=\chi^\varphi_{\W^c}$\,, whose support is included in $\Th_{{\rm supp}(\varphi)}(\mathcal W^c)$\,. Therefore $\chi_\W^\varphi=1$ on the complement of $\Th_{{\rm supp}(\varphi)}(\mathcal W^c)$\,. Taking $\W$ open, $\mathcal W^c$ will be closed and included in $\Si\setminus F$\,, which is $\Th$-invariant. Then $\Th_{{\rm supp}(\varphi)}(\mathcal W^c)$ will also be closed and disjoint from $F$, so $\chi_\W^\varphi=1$ on an open neighborhood of $F$\,. 
}
\end{Remark}

\begin{Remark}\label{edward}
{\rm As an example of closed invariant subset one can consider a quasi-orbit, i.e. the closure of an orbit. Any closed invariant set $F\subset\Si$ is the union of all the quasi-orbits it contains. Note that the spectrum of $h^F:=h|_F$ computed in $\CC(F)$ is an increasing function of $F$\,. So for small closed invariant subsets $F$ (as quasi-orbits, for instance), the support of the localization $\rho$ will probably allowed to be large. The interesting case is, of course, that in which ${\rm supp}(\rho)$ has a large intersection with the spectrum in $\CC_0^\infty(\Si)$ of the initial symbol $h$ (which is obtained formally setting $F=\emptyset$)\,.
}
\end{Remark}

We are going to prove Theorem \ref{grigore} in several steps. 

\begin{Proposition}\label{effort}
For every $\,f\in\C_0^\infty(\Si)^F$\,, $\varepsilon>0$\,, $j \in \mathbb{N}$ and $\varphi\in C^\infty_{{\rm c}}(\Xi)^{\rm n}_+$  there exists $\mathcal U\in\mathcal N_F$ such that 
\begin{equation}\label{valdivia}
\big\Vert\,\chi_\mathcal U^\varphi f\,\big\Vert_{\B_\Th(\Si)}^{(j)}\, \le \varepsilon\,.
\end{equation}
\end{Proposition}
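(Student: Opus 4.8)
The plan is to reduce the statement to an elementary pointwise estimate on the smoothed characteristic function $\chi_\mathcal U^\varphi$, exploiting the fact that $f$ and all its $\D$-derivatives vanish on $F$. Note first that the seminorm on the left of \eqref{valdivia} only involves $\D^\mu(\chi_\mathcal U^\varphi f)$ for $|\mu|\le j$; by the Leibniz rule (valid for $\D$ since it acts by derivations of the pointwise product on $\B_\Th^\infty(\Si)$) and part (1) of Lemma \ref{siegelinda} (which gives $\D^\alpha\chi_\mathcal U^\varphi=\chi_\mathcal U^{\partial^\alpha\varphi}$), each such $\D^\mu(\chi_\mathcal U^\varphi f)$ is a finite sum of terms $\chi_\mathcal U^{\partial^\alpha\varphi}\,\D^\beta f$ with $|\alpha|+|\beta|\le j$. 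So it suffices to bound, for each fixed $\psi\in C^\infty_{\mathrm c}(\Xi)$ (running over $\partial^\alpha\varphi$, $|\alpha|\le j$) and each $g:=\D^\beta f\in\C_0^\infty(\Si)^F$, the sup-norm $\p\chi_\mathcal U^\psi\,g\p_\infty$, uniformly once $\mathcal U$ is chosen small enough; there are only finitely many such pairs.

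Next I would make the key observation about the support of $\chi_\mathcal U^\psi$: by \eqref{balaur}, $\supp(\chi_\mathcal U^\psi)\subset\Th_{\supp(\psi)}(\mathcal U)$, and since $\supp(\psi)\subset\supp(\varphi)=:K$ is a fixed compact set, for \emph{every} choice of $\mathcal U$ one has $\supp(\chi_\mathcal U^\psi)\subset\Th_K(\mathcal U)$. As $\mathcal U$ shrinks down a neighborhood basis of $F$, the sets $\Th_K(\mathcal U)$ shrink toward $\Th_K(F)=F$ (using that $F$ is invariant, $\Th_X(F)\subset F$ for all $X$, hence $\Th_K(F)\subset F$; the reverse inclusion is trivial since $0$ may be taken in $K$ — or one simply notes $\Th_K(F)\subset F$ suffices). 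So $\chi_\mathcal U^\psi$ is supported in an arbitrarily small neighborhood of $F$, while $\p\chi_\mathcal U^\psi\p_\infty\le\p\psi\p_{L^1}\le C$ is bounded. Since $g\in\C_0(\Si)$ vanishes on $F$ and is continuous, for any $\delta>0$ the set $\{|g|\ge\delta\}$ is disjoint from $F$; I would like to conclude it is disjoint from $\Th_K(\mathcal U)$ for $\mathcal U$ small.

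The main technical point — and the step I expect to be the real obstacle — is to pass from "$\{|g|\ge\delta\}$ is disjoint from $F$" to "$\{|g|\ge\delta\}$ is disjoint from $\Th_K(\mathcal U)$ for some $\mathcal U\in\mathcal N_F$". When $F$ is compact this is a routine compactness argument: $\{|g|\ge\delta\}$ is closed (in fact compact, as $g\in\C_0$), disjoint from the closed set $F$... but to get a neighborhood $\mathcal U$ with $\Th_K(\mathcal U)$ still missing $\{|g|\ge\delta\}$ one needs a form of uniform continuity of the map $(X,\si)\mapsto\Th_X(\si)$ on $K\times(\text{a nbhd of }F)$. Concretely: suppose not; then there are $X_m\in K$ and $\si_m\to F$ (i.e. eventually in every $\mathcal U\in\mathcal N_F$) with $|g(\Th_{X_m}(\si_m))|\ge\delta$. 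Passing to a subnet, $X_m\to X\in K$ and, if $F$ is compact, $\si_m$ has a cluster point $\si\in F$; by continuity of $\Th$ and of $g$, $|g(\Th_X(\si))|\ge\delta$, but $\Th_X(\si)\in\Th_K(F)\subset F$ forces $g(\Th_X(\si))=0$, a contradiction. This net/subnet extraction is where one must be a little careful about whether $F$ (or the relevant neighborhoods) are compact — in the intended application $\Si$ is a compactification of $\Xi$ so $F$ and all small closed neighborhoods are compact, and $g\in\C_0(\Si)$ handles the non-compact part. Granting this, choosing $\delta=\varepsilon/(NC)$ with $N$ the number of pairs $(\alpha,\beta)$ and $C$ the $L^1$-bound, and then $\mathcal U$ small enough to work simultaneously for all finitely many pairs, gives $\p\chi_\mathcal U^\psi g\p_\infty\le\delta C$, and summing the finitely many contributions with their Leibniz coefficients yields \eqref{valdivia}. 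I would organize the write-up as: (i) Leibniz reduction to finitely many terms $\chi_\mathcal U^{\partial^\alpha\varphi}\D^\beta f$; (ii) the support containment $\supp(\chi_\mathcal U^\psi)\subset\Th_K(\mathcal U)$ from \eqref{balaur}; (iii) the compactness/uniform-continuity lemma showing these supports avoid $\{|g|\ge\delta\}$ for $\mathcal U$ small; (iv) assembling the bound.
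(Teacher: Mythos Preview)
Your proposal is correct and follows the same overall strategy as the paper: both begin with the Leibniz reduction and the identity $\D^\alpha\chi_\mathcal U^\varphi=\chi_\mathcal U^{\partial^\alpha\varphi}$, and both reduce to making
\[
\sup\big\{\,|(\D^\beta f)(\tau)|:\tau\in\Th_{\supp(\varphi)}(\mathcal U)\,\big\}
\]
small for finitely many $\beta$. The only genuine difference lies in this final step. The paper uses the norm--continuity of $Y\mapsto\Th_Y(\D^\beta f)$ to cover $K=\supp(\varphi)$ by finitely many balls centered at points $Z_i$, then invokes $\Th_{Z_i}(\D^\beta f)\in\C_0(\Si)^F$ to find $\mathcal U_i^\nu\in\mathcal N_F$ on which it is small, and intersects. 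You instead argue topologically via the support inclusion from \eqref{balaur}: the level set $L:=\{|\D^\beta f|\ge\delta\}$ is compact (since $\D^\beta f\in\C_0(\Si)$) and disjoint from $F$; hence $\Th_{-K}(L)$ is compact and, by $\Th$-invariance of $F$, still disjoint from $F$, so its complement is an open neighborhood of $F$ and contains some $\mathcal U\in\mathcal N_F$. Your net argument as written leans on compactness of $F$, but the version just stated avoids this entirely and is exactly what your remark ``$g\in\C_0(\Si)$ handles the non-compact part'' is pointing toward. Both routes are valid; yours is a bit more geometric and sidesteps the $\varepsilon/2$--splitting, while the paper's makes more explicit use of the strong continuity of the action in the $C^*$-dynamical system.
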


\begin{proof}
One has
$$
\big\Vert\,\chi_\mathcal U^\varphi f\,\big\Vert_{\B_\Th(\Si)}^{(j)}\,= \sum_{|\mu| \le j}\frac{1}{\mu!} \big\Vert\,\D^\mu\!\left(\chi_\mathcal U^\varphi f\right)\big\Vert_{\B_\Th(\Si)}\,\le\sum_{|\mu| \le j}\frac{1}{\mu!} \sum_{\nu\le\mu}C^\mu_\nu\big\Vert\,\D^{\mu-\nu}\!\chi_\mathcal U^\varphi\,\D^\nu f\,\big\Vert_{\B_\Th(\Si)}\,,
$$
so one must estimate $\big\Vert\,\D^{\alpha}\chi_\mathcal U^\varphi\,\D^\beta f\,\big\Vert_{\B_\Th(\Si)}$ for a finite number of multi-indices $\alpha,\beta$\,.

\medskip
We know from Lemma \ref{siegelinda} that $\D^{\alpha}\chi_\mathcal U^\varphi=\chi_\mathcal U^{\partial^\alpha\!\varphi}\,.$  Then
$$
\begin{aligned}
\big\Vert\,\D^{\alpha}\chi_\mathcal U^\varphi\,\D^\beta f\,\big\Vert_{\B_\Th(\Si)}\,&=\,\Big\Vert\! \int_\Xi\!dY(\partial^\alpha\varphi)(Y)\,\Theta_{-Y}(\chi_\mathcal U)\,\D^\beta\!f\,\Big\Vert_{\B_\Th(\Si)}\\
&\le\,\,\p\!\partial^\alpha\varphi\!\p_{L^1(\Xi)}\!\sup_{Y\in{\rm supp}(\varphi)}\big\Vert\,\Th_{-Y}(\chi_\mathcal U)\,\D^\beta\!f\,\big\Vert_{\B_\Th(\Si)}\\
&=\,\,\p\!\partial^\alpha\varphi\!\p_{L^1(\Xi)}\!\sup_{Y\in{\rm supp}(\varphi)}\big\Vert\,\Th_{-Y}\!\left[\chi_\mathcal U\,\Th_Y(\D^\beta\!f)\right]\big\Vert_{\infty}\\
&=\,\,\p\!\partial^\alpha\varphi\!\p_{L^1(\Xi)}\!\sup_{Y\in{\rm supp}(\varphi)}\sup_{\si\in\mathcal U}\big\vert\left[\Th_{Y}\,(\D^\beta\!f)\right]\!(\si)\,\big\vert\\
&=\,\,\p\!\partial^\alpha\varphi\!\p_{L^1(\Xi)}\!\sup_{Y\in{\rm supp}(\varphi)}\sup_{\si\in\mathcal U}\big\vert\,(\D^\beta\!f)\left[\Th_{Y}(\si)\right]\big\vert\,.
\end{aligned}
$$
Therefore one can write
\begin{equation}\label{urias}
\begin{aligned}
\big\Vert\,\chi_\mathcal U^\varphi f\,\big\Vert_{\B_\Th(\Si)}^{(j)}\,&\le\,\sum_{|\mu| \le j}\frac{1}{\mu!} \sum_{\nu\le\mu}C^\mu_\nu\p\!\partial^{\mu-\nu}\varphi\!\p_{L^1(\Xi)}\!\sup\left\{\,\big\vert\,(\D^\nu\!f)(\tau)\big\vert\mid \tau\in\Th_{{\rm supp(\varphi)}}(\U)\,\right\}\\
&\le C(j,\varphi)\max_{|\nu|\le j}\sup\!\left\{\,\big\vert\,(\D^\nu\!f)(\tau)\big\vert\mid \tau\in\Th_{{\rm supp(\varphi)}}(\U)\,\right\}\,,
\end{aligned}
\end{equation}
where $C(j,\varphi)$ is a finite constant depending on $j$ and $\varphi$\,.

Given $\varepsilon>0$ we now find $\mathcal U$\,. Since the action $\Th$ is strongly continuous, for every $Z \in{\rm supp}(\varphi)$ there exists a ball $\mathbf{B}(Z,\delta_Z)$ centered in $Z$ such that if $Y \in \mathbf{B}(Z,\delta_Z)$ one has for all $|\nu|\le j$
\begin{equation}\label{beausejour}
\big\Vert\,\Th_Y(\D^\nu f)-\Th_Z(\D^\nu f)\,\big\Vert_{\B_\Th(\Si)}\,\le\,\frac{\varepsilon}{2\,C(j,\varphi)}\,.
\end{equation}
The balls $\mathbf{B}(Z,\delta_Z)$  form a covering of the compact set ${\rm supp}(\varphi)$\,, from which we extract a finite subcovering indexed by $\{Z_i\mid i\in I\}$\,. Since $\Th_{Z_i}(\D^\nu f) \in \C_0(\Si)^F$ for every $i,\nu\,,$ there exists $\mathcal U_i^\nu\in\mathcal N_F$ such that
\begin{equation}\label{vidal}
\big\vert\left[\Th_{Z_i}(\D^\nu\!f)\right]\!(\si)\,\big\vert\le\,\frac{\varepsilon}{2\,C(j,\varphi)}\,,\quad \forall\,\si \in \mathcal U_i^\nu\,.
\end{equation}
Setting 
$$
\mathcal U:=\bigcap\,\left\{\,\mathcal U_i^\nu\,\big\vert\, i\in I,|\nu|\le j\,\right\}\in\mathcal N_F
$$ 
one gets from (\ref{beausejour}) and (\ref{vidal}) 
$$
\big\vert\left[\Th_{Y}(\D^\nu\!f)\right]\!(\si)\,\big\vert\le\,\frac{\varepsilon}{C(j,\varphi)}\,,\quad \forall\,\si \in \mathcal U\ ,\ \,\forall\,Y\in{\rm supp}(\varphi)\,,\ \,\forall\,|\nu|\le j\,.
$$
Inserting this into (\ref{urias}) finishes the proof.
\end{proof}

Now we prove an estimation as (\ref{valdivia}), but with the pointwise product $\cdot$ replaced by the deformed product $\#$\,.

\begin{Proposition}\label{performardo}
For any $f\in\CC_0^\infty(\Si)^F=\C_0^\infty(\Si)^F$\,, $\varphi\in C^\infty_{{\rm c}}(\Xi)^{\rm n}_+$\,, $\varepsilon>0$ and $j \in \mathbb{N}$\,, there exists $\mathcal V\in\mathcal N_F$ such that 
\begin{equation}\label{pinto}
\big\Vert\,\chi_\mathcal V^\varphi\# f\,\big\Vert_{\B_\Th(\Si)}^{(j)}\le \varepsilon\,.
\end{equation}
\end{Proposition}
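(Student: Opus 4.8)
The strategy is to bootstrap from Proposition \ref{effort}: split $f$ into a piece supported in a tiny neighborhood of $F$ — controlled by Proposition \ref{effort} together with the continuity of $\#$ — and a piece supported away from $F$ — controlled by the mild non-locality of Rieffel's product. The only new ingredient I would isolate first is a quantitative sharpening of the fact that $\big(\B_\Th^\infty(\Si),\#\big)$ is a Fr\'echet algebra. For $a,b\in\B_\Th^\infty(\Si)$ set
$$
d(a,b):=\inf\big\{\,|W|\ \big|\ W\in\Xi,\ \Th_W\!\big(\supp(a)\big)\cap\supp(b)\ne\emptyset\,\big\}\in[0,\infty]\,,
$$
with the convention $(1+\infty)^{-1}:=0$. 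I claim that for all $j,K\in\N$ there are $C_{j,K}>0$ and $p=p(j,K)\in\N$ with
\begin{equation}\label{nonloc}
\big\Vert\,a\,\#\,b\,\big\Vert^{(j)}_{\B_\Th(\Si)}\ \le\ C_{j,K}\,\big(1+d(a,b)\big)^{-K}\,\big\Vert a\big\Vert^{(p)}_{\B_\Th(\Si)}\,\big\Vert b\big\Vert^{(p)}_{\B_\Th(\Si)}\,;
\end{equation}
for $K=0$ this is just the usual continuity estimate for $\#$ on $\B_\Th^\infty(\Si)$, and in particular $a\#b=0$ whenever $d(a,b)=\infty$.

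To prove \eqref{nonloc} I would reduce to the case $j=0$ by the Leibniz rule $\D^\mu(a\#b)=\sum_{\nu\le\mu}\binom{\mu}{\nu}(\D^{\mu-\nu}a)\#(\D^\nu b)$ together with $\supp(\D^\mu c)\subset\supp(c)$ (so $d(\D^\gamma a,\D^\delta b)\ge d(a,b)$), and then start from the absolutely convergent expansion \eqref{joseph}. Integrating by parts in $Y$ and in $Z$ inside each term $\int\!\int\psi_P(Y)\psi_Q(Z)\,e^{2i[\![Y,Z]\!]}\,\Th_Y(a)\,\Th_Z(b)\,dY\,dZ$ (the gradient of $[\![Y,Z]\!]$ in $Y$, resp.\ $Z$, has modulus $|Z|$, resp.\ $|Y|$) produces arbitrary polynomial decay in $\langle P\rangle,\langle Q\rangle$, at the cost of replacing $a,b$ by finitely many of their $\D^\mu$-derivatives, hence of a bound by seminorms $\p\!\cdot\!\p^{(p)}_{\B_\Th(\Si)}$. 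Since the integrand of the $(P,Q)$ term vanishes at $\si$ unless $\Th_Y\si\in\supp(a)$ and $\Th_Z\si\in\supp(b)$ for some $Y$ near $P$ and $Z$ near $Q$, such indices force $\Th_{Y-Z}$ to carry a point of $\supp(b)$ into $\supp(a)$, hence $|P-Q|\gtrsim d(a,b)$; summing the decaying series over the surviving indices yields \eqref{nonloc} (the bound being uniform in $\si$ because each $|\D^\alpha a(\Th_Y\si)|$ is bounded by $\p\!\D^\alpha a\!\p_\infty$).

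Now fix $\varphi\in C^\infty_{{\rm c}}(\Xi)^{\rm n}_+$, $\varepsilon>0$, $j\in\N$. For $\mathcal U\in\mathcal N_F$ still to be chosen, write $f=f_1+f_2$ with $f_1:=\chi_\mathcal U^\varphi f$ and $f_2:=(1-\chi_\mathcal U^\varphi)f=\chi_{\mathcal U^c}^\varphi f$, both in $\C_0^\infty(\Si)^F$. Using \eqref{nonloc} with $K=0$, Lemma \ref{siegelinda} ($\D^\mu\chi_\mathcal U^\varphi=\chi_\mathcal U^{\partial^\mu\varphi}$) and the elementary bound $\p\!\chi_\W^{\partial^\mu\varphi}\!\p_\infty\le\p\!\partial^\mu\varphi\!\p_{L^1(\Xi)}$ valid for any $\W$, one gets for every $\mathcal V\in\mathcal N_F$
$$
\big\Vert\chi_\mathcal V^\varphi\#f_1\big\Vert^{(j)}_{\B_\Th(\Si)}\ \le\ C_{j,0}\,M_{\varphi,p_0}\,\big\Vert\chi_\mathcal U^\varphi f\big\Vert^{(p_0)}_{\B_\Th(\Si)}\,,\qquad M_{\varphi,p}:=\textstyle\sum_{|\mu|\le p}\frac1{\mu!}\p\!\partial^\mu\varphi\!\p_{L^1(\Xi)}\,,
$$
with $p_0=p(j,0)$ and $M_{\varphi,p_0}$ independent of $\mathcal U,\mathcal V$. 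Proposition \ref{effort}, applied to $f\in\C_0^\infty(\Si)^F$ at order $p_0$, provides $\mathcal U\in\mathcal N_F$ making this $\le\varepsilon/2$; fix it. For the second term $\mathcal U$ and $f_2$ are now frozen, and by \eqref{balaur} and Remark \ref{eduardo}, $\supp(f_2)\subset\Th_{\supp(\varphi)}(\mathcal U^c)$ is closed and disjoint from the compact invariant set $F$, while $\supp(\chi_\mathcal V^\varphi)\subset\Th_{\supp(\varphi)}(\mathcal V)$. I claim $d(\chi_\mathcal V^\varphi,f_2)\to\infty$ as $\mathcal V$ shrinks to $F$: given $R>0$, put $B_R:=\{|W|\le R\}$ and choose (using local compactness of $\Si$ and compactness of $F$) a neighborhood with compact closure; then the compact sets $\Th_{B_R+\supp(\varphi)}(\overline{\mathcal V})$ decrease to $\Th_{B_R+\supp(\varphi)}(F)=F$, so by compactness some $\mathcal V_R\in\mathcal N_F$ satisfies $\Th_{B_R+\supp(\varphi)}(\overline{\mathcal V_R})\cap\supp(f_2)=\emptyset$, whence $d(\chi_\mathcal V^\varphi,f_2)\ge R$ for all $\mathcal V\subset\mathcal V_R$. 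Feeding this together with $\p\!\chi_\mathcal V^\varphi\!\p^{(p_1)}_{\B_\Th(\Si)}\le M_{\varphi,p_1}$ into \eqref{nonloc} with $K=1$ gives
$$
\big\Vert\chi_\mathcal V^\varphi\#f_2\big\Vert^{(j)}_{\B_\Th(\Si)}\ \le\ C_{j,1}\,(1+R)^{-1}\,M_{\varphi,p_1}\,\big\Vert f_2\big\Vert^{(p_1)}_{\B_\Th(\Si)}\qquad(\mathcal V\subset\mathcal V_R)\,,
$$
which is $\le\varepsilon/2$ once $R$ is large; taking $\mathcal V:=\mathcal V_R$ and adding the two bounds proves the proposition.

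The substantive step is the non-locality inequality \eqref{nonloc}: its $K=0$ instance is classical, and the real work is to re-run Rieffel's integration-by-parts estimate while recording which lattice terms of \eqref{joseph} can possibly be non-zero given the orbital separation of the supports. The topological assertion $d(\chi_\mathcal V^\varphi,f_2)\to\infty$ is then a soft compactness argument which relies essentially on $F$ being compact.
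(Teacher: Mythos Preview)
Your route is genuinely different from the paper's. The paper never splits $f$; instead it splits the lattice sum \eqref{joseph}: using the summability $\sum_{P,Q}C_{PQ}(k)<\infty$ together with a bound $\|G^\varphi_{\mathcal V}\|^{(j,2k)}_{\B_\Th(\Si)}\le C(\varphi,f)$ that is \emph{uniform} in $\mathcal V$, it first discards the tail $|P|+|Q|>m_j$, and then for the finitely many remaining $(P,Q)$ it invokes Proposition~\ref{effort} on the translated functions $f_{Z_i-Y_i}=\Th_{-(Z_i-Y_i)}(f)\in\C_0^\infty(\Si)^F$, after a continuity-in-$(Y,Z)$ reduction to finitely many points. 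No non-locality estimate and no separation of supports enters, and nothing beyond $F$ being closed and invariant is used.

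Your decomposition $f=\chi_{\mathcal U}^\varphi f+(1-\chi_{\mathcal U}^\varphi)f$ together with the non-locality bound $\|a\#b\|^{(j)}\le C_{j,K}(1+d(a,b))^{-K}\|a\|^{(p)}\|b\|^{(p)}$ is attractive, and your sketch of that bound via integration by parts in \eqref{joseph} is sound. The gap is at the very end: you explicitly assume $F$ compact in order to run the decreasing-compacta argument for $d(\chi_{\mathcal V}^\varphi,f_2)\to\infty$, and this hypothesis is not available---the proposition only asks $F$ to be closed and $\Th$-invariant in the locally compact space $\Si$, and in the applications of Section~\ref{onces} the quasi-orbit $E_\tau$ need not be compact. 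The repair is cheap once noticed: in the definition of $d(a,b)$ you may replace $\supp(a),\supp(b)$ by the open sets $\{a\ne0\},\{b\ne0\}$ without affecting the proof of the non-locality bound (the $(P,Q)$ term at $\sigma$ already vanishes as soon as $a(\Th_Y\sigma)\,b(\Th_Z\sigma)=0$ for all relevant $Y,Z$, and $\supp(\D^\mu a)\subset\supp(a)$ keeps this stable under the derivatives produced by integration by parts). With this modified $d$, the set $C_R:=\Th_{-(B_R+\supp\varphi)}(\supp f_2)$ is closed (a tube-lemma argument using only compactness of $B_R+\supp\varphi$ and continuity of $\Th$) and disjoint from the invariant set $F$, so any $\mathcal V\in\mathcal N_F$ contained in the open set $\Si\setminus C_R$ already yields $d(\chi_{\mathcal V}^\varphi,f_2)\ge R$---no compactness of $F$ is needed. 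As written, however, your argument does not establish the proposition in its stated generality.
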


\begin{proof}
For the composition $\chi_\mathcal V^\varphi\# f$ we are going to use the representation (\ref{joseph}).

\medskip
For $G\in BC^\infty(\Xi\times\Xi;\B_\Th^\infty(\Si))$ and $j,m\in\mathbb N$ we set
\begin{equation}\label{chavela}
\begin{aligned}
\p\!G\!\p_{\B_\Th(\Si)}^{(j,m)}\,:=\,
&\max_{i\le j}\!\sum_{|(\mu,\nu)|\le m}\frac{1}{\mu!\nu!}\sup_{Y,Z\in\Xi}\big\Vert\,(\partial_Y^\mu\partial_Z^\nu G)(Y,Z)\,\big\Vert_{\B_\Th(\Si)}^{(i)}\\
=&\,\max_{i\le j}\!\sum_{|(\mu,\nu)|\le m}\frac{1}{\mu!\nu!}\sup_{Y,Z\in\Xi}\sum_{|\alpha|\le i}\frac{1}{\alpha!}\big\Vert\,\D^\alpha\!\left[(\partial_Y^\mu\partial_Z^\nu G)(Y,Z)\right]\big\Vert_{\B_\Th(\Si)}\,.
\end{aligned}
\end{equation}
By \cite[Prop. 1.6]{Rie1}, for every $k>2n$ we have estimates given by
\begin{equation}\label{jose}
\Big\Vert \int_\Xi\!\int_\Xi\!dYdZ\, e^{2i[\![Y,Z]\!]}\psi_P(Y)\psi_Q(Z)G(Y,Z)\,\Big\Vert_{\B_\Th(\Si)}^{(j)}\le C_{PQ}(k)\!\p\!G\!\p_{\B_\Th(\Si)}^{(j,2k)}\,, 
\end{equation}
where $\sum_{P,Q} C_{PQ}(k) < \infty$\,. Applying this to $G^\varphi_\mathcal V(Y,Z):=\Theta_{Y}\!\left[\chi_\mathcal V^\varphi\,\Theta_{Z-Y}(f)\right]$ 
and relying on the representation (\ref{joseph}), one gets
$$
\Big\Vert \int_\Xi\!\int_\Xi\!dYdZ\,e^{2i[\![Y,Z]\!]}\psi_P(Y)\psi_Q(Z)\Theta_{Y}\!\left[\chi_\mathcal V^\varphi\,\Theta_{Z-Y}(f)\right] \Big\Vert_{\B_\Th(\Si)}^{(j)}
\,\le C_{PQ}(k)\big\Vert\,G^\varphi_\mathcal V\,\big\Vert_{\B_\Th(\Si)}^{(j,2k)}\,. 
$$
A direct computation shows that the quantity $\,\big\Vert\,G^\varphi_\mathcal V\,\big\Vert_{\B_\Th(\Si)}^{(j,2k)}\,$ is bounded uniformly in $\mathcal V$\,, because it is dominated by a finite linear combination of terms of the form 
$$
\big\Vert\,\chi_\mathcal V^{\partial^\beta\varphi}\,\big\Vert_\infty\,\big\Vert\,\D^\gamma f\,\big\Vert_\infty\le\big\Vert\,\partial^\beta\varphi\,\big\Vert_{L^1(\Xi)}\,\big\Vert\,\D^\gamma f\,\big\Vert_\infty\,.
$$
Thus, for any $\varepsilon>0$\,, there exists $m_j\in\mathbb N$ such that for every $\mathcal V\in\mathcal N_F$ one has
$$ 
\sum_{|P|+|Q| > m_j} \Big\Vert \int_\Xi\!\int_\Xi \!dYdZ\,e^{2i[\![Y,Z]\!]}\psi_P(Y) \psi_Q(Z)\Theta_{Y}\!\left[\chi_\mathcal V^\varphi\,\Theta_{Z-Y}(f)\right] \Big\Vert_{\B_\Th(\Si)}^{(j)} \le \varepsilon/2\,.
$$
We still have to bound by $\varepsilon/2$ the remaining finite family of terms, this time for some special neighborhood $\V$ of $F$\,. Using the continuity of the action $\Th$ and the compacity of the support of $\psi_P, \psi_Q$\,, there exists a finite family 
of balls $\{\,\mathbf B(Y_i,\delta_i)\times\mathbf B(Z_i,\delta_i')\}_{i\in I}$ which covers the suport of $\psi_P\otimes\psi_Q$\,, such that for $(Y,Z)\in\mathbf B(Y_i,\delta_i)\times\mathbf B(Z_i,\delta_i')$ one has
\begin{equation}\label{siegfrid}
\big\Vert\,\chi_\mathcal V^\varphi\left(f_{Z_i-Y_i}-f_{Z-Y}\right)\big\Vert_{\B_\Th(\Si)}^{(j)} \,\le \varepsilon/2M\,,\quad \ \forall\,j \le k\,,
\end{equation}
where $f_{X}:=\Th_{-X}(f)\in\C_0^\infty(\Si)^F$ and $M$ is some positive number. In addition, by Proposition \ref{effort}, for every $i\in I$ there is some $\mathcal V_i\in\mathcal N_F$ such that 
\begin{equation}\label{etzel}
\big\Vert\,\chi^\varphi_{\mathcal V_i} f_{Z_i-Y_i}\,\big\Vert_{\B_\Th(\Si)}^{(j)} \,\le \varepsilon/2M \,,\ \quad\forall\,j \le k\,.
\end{equation}
One takes the finite intersection $\mathcal V=\bigcap_{i\in I}\!\mathcal V_i$ and then, by (\ref{siegfrid}), (\ref{etzel}) and the fact that the action $\Th$ is isometric with respect to all the semi-norms, we can estimate the compactly supported integral
$$
\begin{aligned}
\Big\Vert\int_\Xi\!\int_\Xi&\!dYdZ\,e^{2i[\![Y,Z]\!]}\psi_P(Y) \psi_Q(Z)\Theta_{Y}\!\left[\chi_\mathcal V^\varphi\,\Theta_{Z-Y}(f)\right] \Big\Vert_{\B_\Th(\Si)}^{(j)}\\
&\le\,M_{P,Q}\,\sup\left\{\big\Vert\,\chi_\mathcal V^\varphi f_{Z-Y}\,\big\Vert_{\B_\Th(\Si)}^{(j)}\,\big\vert\, Y\in{\rm supp}(\psi_P),Z\in{\rm supp}(\psi_Q)\right\}\\
&\le M_{P,Q}\,\sup_{i\in I}\big\Vert\,\chi_\mathcal V^\varphi f_{Z_i-Y_i}\,\big\Vert_{\B_\Th(\Si)}^{(j)}\\ 
&+ M_{P,Q}\sup_{i\in I}\sup\left\{\big\Vert\,\chi_\mathcal V^\varphi( f_{Z_i-Y_i}-f_{Z-Y})\big\Vert_{\B_\Th(\Si)}^{(j)}\,\big\vert\, Y\in\mathbf B(Y_i,\delta_i),Z\in\mathbf B(Z_i,\delta'_i)\right\}\\
&\le M_{P,Q}\left(\frac{\varepsilon}{2M}+\frac{\varepsilon}{2M}\right)=\frac{M_{P,Q}}{M}\,\varepsilon\,.
\end{aligned}
$$
Then,  choosing $M:=\!\!\underset{|P|+|Q| \le m_j}{\sum}\!\!C_{P,Q}M_{P,Q}$\,, one gets the estimation.
\end{proof}

Now we change the semi-norms.

\begin{Proposition}\label{pdeformado}
For any  $f\in\mathfrak C_0^\infty(\Si)^F$\,, $\varphi\in C^\infty_{{\rm c}}(\Xi)^{\rm n}_+$\,, $\varepsilon>0$ and $k \in \mathbb{N}$\,, there exists $\mathcal W\in\mathcal N_F$ such that 
\begin{equation}\label{pinto}
\big\Vert\,\chi_\mathcal W^\varphi\# f\,\big\Vert_{\mathfrak B_\Th(\Si)}^{(k)}\le\varepsilon\,.
\end{equation}
\end{Proposition}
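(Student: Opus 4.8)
The plan is to bootstrap from Proposition \ref{performardo} by comparing the deformed seminorms $\p\!\cdot\!\p_{\mathfrak B_\Th(\Si)}^{(k)}$ with the undeformed ones $\p\!\cdot\!\p_{\B_\Th(\Si)}^{(j)}$. The key technical input will be that the Rieffel product and the deformed seminorms are controlled by the undeformed Fr\'echet structure: by \cite[Prop. 1.6]{Rie1} (or the version recorded in \eqref{jose} above), for each $k$ there is $N_k\in\N$ and a constant $c_k$ with $\p\!g\!\p_{\mathfrak B_\Th(\Si)}^{(k)}\le c_k\,\p\!g\!\p_{\B_\Th(\Si)}^{(N_k)}$ for all $g\in\B_\Th^\infty(\Si)$. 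First I would invoke this to reduce \eqref{pinto} to the statement that $\p\!\chi_\mathcal W^\varphi\# f\,\p_{\B_\Th(\Si)}^{(N_k)}$ can be made $\le\varepsilon/c_k$, which is exactly what Proposition \ref{performardo} delivers upon taking $j:=N_k$ and $\varepsilon':=\varepsilon/c_k$. This yields the desired $\mathcal W\in\mathcal N_F$.

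There is one point that needs care: in Proposition \ref{performardo} the hypothesis is $f\in\CC_0^\infty(\Si)^F=\C_0^\infty(\Si)^F$, which as a set is the same as $\mathfrak C_0^\infty(\Si)^F$ (the smooth vectors of the ideal coincide with $\AA^\infty\cap\JJ$ by Lemma \ref{sigfrid}); so the input element is the same, only the norm in which we measure the output changes. I would make this identification explicit at the start of the argument to avoid any confusion between the algebraic object $f$ and the $C^*$-norm it is being measured in. The product $\chi_\mathcal W^\varphi\# f$ is taken in $\B_\Th^\infty(\Si)$ (equivalently $\mathfrak B_\Th^\infty(\Si)$, since the underlying vector space is unchanged), so both seminorm families apply to it.

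The main obstacle — really the only non-routine point — is establishing the comparison inequality $\p\!g\!\p_{\mathfrak B_\Th(\Si)}^{(k)}\le c_k\,\p\!g\!\p_{\B_\Th(\Si)}^{(N_k)}$ in a form that applies to a general $g\in\B_\Th^\infty(\Si)$ and not merely to elements already presented as a Rieffel product. The clean way is to write $g=g\#1$ when $\B_\Th(\Si)$ is unital, or more robustly to note that the deformed norm $\p\!g\!\p_{\mathfrak B_\Th(\Si)}$ is by construction the operator norm of left Rieffel-multiplication by $g$ on a Hilbert module, and that $\D^\alpha$ commutes appropriately with $\#$ (the Leibniz-type rule $\D^\alpha(f\#g)=\sum_{\beta\le\alpha}\binom{\alpha}{\beta}\D^\beta f\#\D^{\alpha-\beta}g$, which follows from differentiating \eqref{rodact} under the action), so that $\p\!\D^\alpha g\!\p_{\mathfrak B_\Th(\Si)}$ is dominated by the operator norm of multiplication by $\D^\alpha g$, and the latter is in turn bounded by \eqref{jose}-type estimates in terms of finitely many undeformed seminorms of $\D^\alpha g$. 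Summing over $|\alpha|\le k$ absorbs everything into a single undeformed seminorm $\p\!g\!\p_{\B_\Th(\Si)}^{(N_k)}$ with $N_k$ depending only on $k$ and $n$. Once this comparison is in hand the proposition is immediate from Proposition \ref{performardo}.
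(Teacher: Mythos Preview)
Your approach is correct and is essentially the same as the paper's: reduce to Proposition~\ref{performardo} via a comparison $\p g\p_{\mathfrak B_\Th(\Si)}^{(k)}\le c_k\,\p g\p_{\B_\Th(\Si)}^{(N_k)}$. The paper simply invokes the full \emph{equivalence} of the two seminorm families from \cite[Ch.~7]{Rie1} (flagged there as ``a rather deep result'') rather than attempting to extract the one-sided bound from \cite[Prop.~1.6]{Rie1} as you do; your sketch of that direction is plausible but the clean citation is Chapter~7, not Proposition~1.6.
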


\begin{proof}
This follows from our Proposion \ref{performardo} and the equivalence \cite[Ch. 7]{Rie1} of the families of seminorms (\ref{medel}) and (\ref{orellana}), which is a rather deep result.
\end{proof}

\noindent
{\bf End of the proof of Theorem \ref{grigore}}.
To finish the proof of Theorem \ref{grigore}, one uses Lemma \ref{sigfrid} with $\AA:=\CC_0(\Si)$ and $\JJ:=\CC_0(\Si)^F$\,. Notice the identification 
$\sp_{\CC_0(\Si)^F}(h)=\sp\!\left(h^F\big\vert\CC(F)\right)$\,. This allows us to take $f=\rho(h)\in\CC^\infty_0(\Si)^F$ in Proposition \ref{pdeformado} and to get
\begin{equation}\label{pinto}
\big\Vert\,\chi_\mathcal W^\varphi\# \rho(h)\,\big\Vert_{\mathfrak B_\Th(\Si)}^{(k)}\,\le\varepsilon
\end{equation}
under the stated conditions. The case $k=0$ is enough for our purposes.

\section{Localization and non-propagation for pseudodifferential operators}\label{onces}

We start with the simplest situation. We take $\Si$ to be a locally compact space containing $\Xi=\mathbb R^{2n}$ densely. If $\Si$ is even compact, it will be a compactification of $\Xi$\,. One denotes by $\A_\Si(\Xi)$ the $C^*$-algebra composed of restrictions to $\Xi$ of all the elements of $\mathcal C_0(\Si)$\,.
Then $\A_\Si(\Xi)$ is a $C^*$-subalgebra of ${\sf BC}(\Xi)$ which is canonically isomorphic to $\mathcal C_0(\Si)$ by the extension/restriction isomorphism. Thus the Gelfand spectrum of $\A_\Si(\Xi)$ is homeomorphic to $\Si$\,. 

Let us also assume that $\A_\Si(\Xi)$ is contained in ${\sf BC}_{{\rm u}}(\Xi)$ and it is invariant under translations. It follows easily that the action of $\Xi$ on itself by translations extends to a continuous action $\Th$ of $\Xi$ by homeomorphisms of $\Si$\,. This action is topologically transitive: $\Xi$ is an open dense orbit. Let us set $\Si_\infty:=\Si\setminus\Xi$ for the boundary.

Since $\left(\A_\Si(\Xi),\Th,\Xi\right)$ is a (commutative) $C^*$-dynamical system, one can perform Rieffel's procedure to turn it in the (non-commutative) $C^*$-dynamical system $\left(\AA_\Si(\Xi),\Th,\Xi\right)$\,. The common set of smooth vectors $\AA^\infty_\Si(\Xi)=\A^\infty_\Si(\Xi)$ is contained in $\BC^\infty(\Xi)$\,. 

It is known that $\BC^\infty(\Xi)$ is the family of smooth vectors of the $\Xi$-algebra $\BC_{{\rm u}}(\Xi)$\,, whose Rieffel quantization will be denoted by $\mathfrak B\mathfrak C_{{\rm u}}(\Xi)$\,. But on $\BC^\infty(\Xi)$\,, by the Calder\'on-Vaillancourt Theorem \cite{Fo}, one can apply the Schr\"odinger representation in $\H:=\ltwo(\X)$
\begin{equation}\label{opa}
\Op:\BC^\infty(\Xi)\to\mathbb B(\H)
\end{equation}
given in the sense of oscillatory integrals by 
\begin{equation}\label{teisn}
\big[\Op(f)u\big](x)=(2\pi)^{-n}\!\!\int_\X\!\!d y\!\int_{\X^*}\!\!\!\!d\xi\,e^{i(x-y)\cdot\xi}f\left(\frac{x+y}2,\xi\right)u(y)\,.
\end{equation}
In particular, this works for $f\in\AA^\infty_\Si(\Xi)$\,.

We also fix a closed $\Th$-invariant subset $F$ of $\,\Si_\infty$\,; it can be a quasiorbit for instance. As in Section \ref{ones}, we also consider a neighborhood basis $\mathcal N_F$ of the set $F$ in $\Si$\,. For every $\W\in\mathcal N_F$ we set $W:=\W\cap\Xi$\,. Then the function $\chi_W^\varphi$ is the restriction of $\chi_\W^\varphi$ to $\Xi$ and it belongs to $\BC^\infty(\Xi)$\,, hence $\Op\!\left(\chi_W^\varphi\right)$ makes sense as a bounded operator in $L^2(\X)$\,. 

Finally let $h\in\C_0^\infty(\Si)=\CC_0^\infty(\Si)$ be a real function and set $H:=\Op(h)=H^*$, a bounded operator in $\H:= L^2(\X)$ (we use the same notation $h$ for the restriction of $h:\Si\rightarrow\mathbb R$ to $\Xi$)\,. Relying on \cite{Ma2}, we give an operator interpretation for the set ${\rm sp}(h^F)$\,, the spectrum of $h^F:=h|_F$ computed in the non-commutative $C^*$-algebra $\CC(F)$\,. Let us write $\mathfrak Q(F)$ for the set of all quasi-orbits of the closed invariant set $F$ and denote by $\mathfrak Q_0(F)$ a subset of $\mathfrak Q(F)$ such that $F=\bigcup_{Q\in\mathfrak Q_0(F)}Q$\,. In each quasi-orbit $Q$ pick a point $\si_Q$ such that the orbit of this point is dense in $Q$\,. Then 
\begin{equation}\label{dragon}
h^{\si_Q}:\Xi\rightarrow\mathbb R\,,\quad\ h^{\si_Q}(X):=h\!\left[\Th_X(\si_Q)\right]
\end{equation}
is an element of $\BC^\infty(\Xi)$\,, to which one can apply $\Op$\,; let us set $H^{\si_Q}:=\Op\left(h^{\si_Q}\right)$\,. It can be shown \cite{Ma2} that:
\begin{itemize}
\item
The spectrum $S^Q$ of the bounded self-adjoint operator $H^{\si_Q}$ depends only of the quasi-orbit $Q$ and not of the generating point $\si_Q$\,.
\item
One has 
\begin{equation}\label{fantoma}
S^F:={\rm sp}(h^F)=\overline{\bigcup_{Q\in\mathfrak Q(F)}S^Q}=\overline{\bigcup_{Q\in\mathfrak Q_0(F)}S^Q}\,.
\end{equation} 
Of course, if $F$ is itself a quasi-orbit one can take $\mathfrak Q_0(F)=\{F\}$ and the statements simplify a lot.
\item
The set ${\rm sp}(h^F)$ is contained in the essential spectrum ${\rm sp}_{{\rm ess}}(H)$ of the initial operator $H$\,.
\item
Actually, if we cover $\Gamma$ by closed $\Th$-invariant sets $F$\,, one has
\begin{equation}\label{moroi}
{\rm sp}_{{\rm ess}}(H)=\overline{\bigcup_{F}S^F}=\overline{\bigcup_{Q\in\mathfrak Q(\Gamma)}S^Q}\,.
\end{equation}
\end{itemize}

Now we can state and prove

\begin{Theorem}\label{parriel}
Let $h\in\C_0^\infty(\Si)=\CC_0^\infty(\Si)$ be a real function and set $H:=\Op(h)$\,. Let $\rho:\mathbb R\rightarrow\mathbb R_+$ be a bounded continuous function such that $\,\supp(\rho)\cap S^F=\emptyset$\,. For every $\varepsilon>0$ there exists $\W\in \mathcal N_F$ and $\varphi\in C^\infty_{{\rm c}}(\Xi)$ such that
\begin{equation}\label{localiz}
\big\Vert\,\Op\!\left(\chi_W^\varphi\right)\rho(H)\,\big\Vert_{\mathbb B(\H)}\le\varepsilon\,.
\end{equation}
In particular, one has uniformly in $t\in\mathbb R$ and  $u\in\H$
\begin{equation}\label{nopropag}
\big\Vert\,\Op\!\left(\chi_{W}^\varphi\right)e^{itH}\rho(H)u\,\big\Vert\,\le\varepsilon\!\parallel\!u\!\parallel.
\end{equation}
\end{Theorem}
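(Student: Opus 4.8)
The plan is to deduce Theorem \ref{parriel} from the abstract Theorem \ref{grigore} by transporting the symbolic estimate through the Schr\"odinger representation \eqref{opa}. First I would observe that since $h\in\CC_0^\infty(\Si)$ and $\rho$ is bounded continuous with $\supp(\rho)\cap S^F=\emptyset$, Lemma \ref{sigfrid} (applied with $\AA=\CC_0(\Si)$ and $\JJ=\CC_0(\Si)^F$, using the identification $S^F=\sp_{\CC_0(\Si)^F}(h)=\sp(h^F\mid\CC(F))$) gives $\rho(h)\in\CC_0^\infty(\Si)^F$. Therefore the hypotheses of Theorem \ref{grigore} are met, and for any fixed $\varphi\in C^\infty_{{\rm c}}(\Xi)_+^{\rm n}$ we obtain, for each prescribed $\varepsilon'>0$, a neighborhood $\W\in\mathcal N_F$ with $\big\Vert\chi_\mathcal W^\varphi\#\rho(h)\big\Vert_{\mathfrak B_\Th(\Si)}^{(0)}\le\varepsilon'$; that is, $\Vert\chi_\mathcal W^\varphi\#\rho(h)\Vert_{\mathfrak B_\Th(\Si)}\le\varepsilon'$ in the $C^*$-norm of the Rieffel deformation.

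Next I would connect the $C^*$-norm on $\mathfrak B_\Th(\Si)$ with the operator norm on $\mathbb B(\H)$. The key point is that $\Op$ intertwines the Rieffel product $\#$ with operator composition: $\Op(a\# b)=\Op(a)\Op(b)$ for smooth symbols, and $\Op$ is a $^*$-morphism bounded for the operator norm by the Calder\'on--Vaillancourt estimate, so $\Vert\Op(a)\Vert_{\mathbb B(\H)}\le\Vert a\Vert_{\mathfrak B_\Th(\Si)}$ (more precisely, $\Op$ extends to a representation of $\mathfrak B\mathfrak C_{{\rm u}}(\Xi)$, hence of the subalgebra $\AA_\Si(\Xi)$, and is norm-decreasing). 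Since $\chi_W^\varphi$ is the restriction to $\Xi$ of $\chi_\mathcal W^\varphi$ and $h$ (restricted to $\Xi$) defines $H=\Op(h)$ with $\rho(H)=\Op(\rho(h))$ (the continuous functional calculus commutes with the representation), one has
\begin{equation*}
\Op(\chi_W^\varphi)\rho(H)=\Op(\chi_W^\varphi)\Op(\rho(h))=\Op\!\left(\chi_\mathcal W^\varphi\#\rho(h)\right),
\end{equation*}
and therefore $\big\Vert\Op(\chi_W^\varphi)\rho(H)\big\Vert_{\mathbb B(\H)}\le\big\Vert\chi_\mathcal W^\varphi\#\rho(h)\big\Vert_{\mathfrak B_\Th(\Si)}\le\varepsilon'$. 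Taking $\varepsilon'=\varepsilon$ yields \eqref{localiz}.

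Finally, \eqref{nopropag} is a purely functional-analytic consequence of \eqref{localiz}. Since $H$ is self-adjoint and bounded, $e^{itH}$ is unitary and commutes with $\rho(H)$ (both are functions of $H$ via the bounded Borel calculus), so for any $u\in\H$
\begin{equation*}
\big\Vert\Op(\chi_W^\varphi)e^{itH}\rho(H)u\big\Vert=\big\Vert\Op(\chi_W^\varphi)\rho(H)e^{itH}u\big\Vert\le\big\Vert\Op(\chi_W^\varphi)\rho(H)\big\Vert_{\mathbb B(\H)}\,\big\Vert e^{itH}u\big\Vert\le\varepsilon\Vert u\Vert,
\end{equation*}
uniformly in $t\in\mathbb R$. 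The main obstacle is the second step: making rigorous that $\Op$ is a norm-contractive $^*$-representation of (the relevant subalgebra of) the Rieffel-deformed $C^*$-algebra $\mathfrak B\mathfrak C_{{\rm u}}(\Xi)$ intertwining $\#$ with composition — this rests on Rieffel's identification of the quantized calculus with Weyl quantization together with Calder\'on--Vaillancourt, and on checking that $\rho(H)=\Op(\rho(h))$ as elements associated to the deformed algebra; once these identifications are in place, the rest is bookkeeping. One also should note that $\varphi$ can be chosen freely (indeed fixed in advance), which is why the statement quantifies over the existence of $\varphi$ together with $\W$.
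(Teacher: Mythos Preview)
Your proof is correct and follows essentially the same route as the paper: invoke Theorem \ref{grigore} (whose final step is exactly your appeal to Lemma \ref{sigfrid}) to get the symbolic estimate, then transport it through $\Op$ using multiplicativity and compatibility with the functional calculus. The only minor difference is that the paper observes $\Op$ extends to a \emph{faithful} (hence isometric) representation of $\mathfrak B\mathfrak C_{{\rm u}}(\Xi)$, yielding an equality of norms rather than the inequality you obtain from contractivity; either suffices for \eqref{localiz}.
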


\begin{proof}
It is known (cf. \cite[Lemma 3.1]{BM} or \cite[Prop. 2.6]{Ma2}) that the mapping $\Op$ extends to a faithful (therefore isometric) representation of $\mathfrak B\mathfrak C_{{\rm u}}(\Xi)$ in $\H$\,. We can use its restriction to our algebra $\AA_\Si(\Xi)$ and apply it to the element $\rho(h)$\,. Note however that $\chi_W^\varphi$\,, element of ${\sf BC}^\infty(\Xi)\subset\mathfrak B\mathfrak C_{{\rm u}}(\Xi)$\,, has a priori no reason to belong to $\AA_\Si(\Xi)$\,.

For the first estimate we use the fact that, being a representation, $\Op$ is multiplicative and commutes with the functional calculus:
$$
\Op\!\left(\chi_W^\varphi\right)\rho(H)=\Op\!\left(\chi_W^\varphi\right)\rho[\Op(h)]=\Op\!\left(\chi_W^\varphi\right)\Op[\rho(H)]=\Op\!\left[\chi_W^\varphi\sharp\,\rho(h)\right].
$$
We denoted by $\sharp$ the Weyl composition law of symbols \cite{Fo}, corresponding isomorphically to the composition $\#$\,.
Then we use Theorem \ref{grigore}, the isomorphisms $\AA_\Si(\Xi)\cong\mathfrak C_0(\Si)$ and $\mathfrak B\mathfrak C_{{\rm u}}(\Xi)\cong\mathfrak B_\Th(\Si)$ 
and the fact that $\Op$ is an isometry to write
\begin{equation}
\big\Vert\,\Op\!\left(\chi_W^\varphi\right)\rho(H)\,\big\Vert_{\mathbb B(\H)}=\,\big\Vert\,\Op\!\left[\chi_W^\varphi\sharp\,\rho(h)\right]\big\Vert_{\mathbb B(\H)}=\,\big\Vert\,\chi_\mathcal W^\varphi\#\rho(h)\,\big\Vert_{\mathfrak B_{\Th}(\Si)}\le\varepsilon\,.
\end{equation}

As it has been said repeatedly, the second estimate (\ref{localiz}) follows from (\ref{nopropag}).
\end{proof}

A variant of Theorem \ref{parriel} involving localization along ultrafilters can be obtained in the setting of \cite{Ma3}.
{\it The Weyl system} $\pi:\Xi\rightarrow\mathbb U(\H)$ is defined for all $X\in\Xi$ and $u\in\H:= L^2(\X)$ by
\begin{equation}\label{weyl}
[\pi(X)u](y):=e^{i(y-x/2)\cdot\xi}u(y-x)\,.
\end{equation}
It is a projective unitary representation with multiplier  given in terms of the symplectic form:
\begin{equation}\label{proj}
\pi(X)\pi(Y)=\exp\left(\frac{i}{2}[\![X,Y]\!]\right)\pi(X+Y)\,,\ \quad\forall\,X,Y\in\Xi\,.
\end{equation}
We denote by 
\begin{equation}\label{auto}
\Pi:\Xi\to\mathbb \Aut[\mathbb B(\H)]\,,\ \quad\Pi(X)T:=\pi(X)T\pi(-X)
\end{equation}
the automorphism group associated to $\pi$\,. The $C^0$-vectors of this automorphism group form a $C^*$-subalgebra
\begin{equation}\label{contin}
\mathbb B^0(\H):=\{\,S\in\mathbb B(\H)\mid\, \Xi\ni X\mapsto\Pi(X)S\in\mathbb B(\H)\ \,\p\!\cdot\!\p-{\rm continuous}\,\}\,,
\end{equation}
while the $C^\infty$-vectors
\begin{equation}\label{suav}
\mathbb B^\infty(\H):=\{\,S\in\mathbb B(\H)\mid\, \Xi\ni X\mapsto\Pi(X)S\in\mathbb B(\H)\ \,\ {\rm is\ C^\infty\ in\ norm}\,\}
\end{equation}
form a dense $^*$-subalgebra.

We also denote by $\delta(\Xi)$ the family of all ultrafilters on $\Xi$ that are finer than the Fr\'echet filter. Recall from \cite{Ma3} that the essential spectrum of any self-adjoint operator $H$ belonging to $\mathbb B^0(\Xi)$ is given by
\begin{equation}\label{HWZ}
{\rm sp}_{\rm ess}(H)=\overline{\cap_{\mathcal X\in\Xi}\,{\rm sp}(H_\mathcal X)}\,,
\end{equation}
where the limits $H_\mathcal X:=\lim_{X\to\mathcal X}\Pi(X)H$ are shown to exist in the strong sense. 

\begin{Theorem}\label{conjecture}
Let $H$ be a self-adjoint operator in $\H=L^2(\X)$ belonging to $\mathbb B^\infty(\H)$\,. Let us fix an ultrafilter $\mathcal X$ on $\Xi$ finer than the Fr\'echet filter and choose a bounded continuous function $\rho:\mathbb R\to\mathbb R_+$ such that $\,{\rm supp}(\rho)\cap{\sp(H_\mathcal X)}=\emptyset$\,. 
Then for every $\epsilon>0$ and $\varphi\in C^\infty_{\rm c}(\Xi)_+^{\rm n}\,$ there exists $W\in\mathcal X$ such that 
\begin{equation}\label{estim}
\big\Vert\,\Op(\chi^\varphi_W)\,\rho(H)\,\big\Vert_{\mathbb B(\H)}\le \epsilon\,.
\end{equation}
\end{Theorem}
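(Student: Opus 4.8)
The plan is to deduce Theorem \ref{conjecture} from Theorem \ref{parriel}, by realizing the ultrafilter $\mathcal X$ as a point ``at infinity'' of a compactification of $\Xi$ adapted to $H$\,.

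First I would pass from $H$ to its Weyl symbol. Since $H\in\mathbb B^\infty(\H)$\,, I would use the known identification of the Heisenberg-smooth operators with the Weyl calculus on $\BC^\infty(\Xi)$ (\cite{BM}; \cf also \cite{Ma3}) to write $H=\Op(h)$ for a real $h\in\BC^\infty(\Xi)$\,, noting also that $\Pi(X)H=\Op(\mathcal T_X h)$ with $(\mathcal T_X h)(Y):=h(Y+X)$\,. Then I would let $\A$ be the smallest unital, translation-invariant $C^*$-subalgebra of $\BC_{{\rm u}}(\Xi)$ containing $h$ and $\C_0(\Xi)$\,. Its Gelfand spectrum $\Si$ is a compactification of $\Xi$ on which the translations extend to a continuous action $\Th$ with $\Xi$ as an open dense orbit; moreover $h$ extends to a real element of $\C_0^\infty(\Si)=\CC_0^\infty(\Si)$ and $H=\Op(h)$\,, so that we are precisely in the setting of Section \ref{onces}.

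Next I would identify the relevant asymptotic operator. Being compact, $\Si$ forces the filter base $\mathcal X$ on $\Si$ to converge to a single point $\si_{\mathcal X}$\,; since $\mathcal X$ is finer than the Fr\'echet filter (\ie contains the complements of all relatively compact subsets of $\Xi$) and $\Xi$ is open in $\Si$\,, in fact $\si_{\mathcal X}\in\Si_\infty$\,. I would put $F:=\overline{\{\Th_X(\si_{\mathcal X})\mid X\in\Xi\}}$\,, the quasi-orbit of $\si_{\mathcal X}$: a closed $\Th$-invariant subset of $\Si_\infty$ in which $\si_{\mathcal X}$ has dense orbit. The crucial step is to check that the asymptotic operator $H_{\mathcal X}=\lim_{X\to\mathcal X}\Pi(X)H$ of (\ref{HWZ}) coincides with the asymptotic operator $H^{\si_{\mathcal X}}=\Op(h^{\si_{\mathcal X}})$ attached to $\si_{\mathcal X}$ in \cite{Ma2} through (\ref{dragon}). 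Indeed $\Pi(X)H=\Op(\mathcal T_X h)$\,; using the continuous extension of $h$ to $\Si$ and the fact that $X\to\si_{\mathcal X}$ in $\Si$ along $\mathcal X$\,, one gets $(\mathcal T_X h)(Y)\to h^{\si_{\mathcal X}}(Y)$ for each $Y$\,, and this convergence is locally uniform because the family $\{\mathcal T_X h\mid X\in\Xi\}$ is equicontinuous (all derivatives of $h$ being bounded) — which is enough to pass to the strong limit under $\Op$\,. Uniqueness of the strong limit then gives $H_{\mathcal X}=H^{\si_{\mathcal X}}$\,, and (\ref{fantoma}) applied to the quasi-orbit $F$ (so that $\mathfrak Q_0(F)=\{F\}$) yields $S^F=\sp(h^F\mid\CC(F))=\sp(H^{\si_{\mathcal X}})=\sp(H_{\mathcal X})$\,. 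Thus the hypothesis $\supp(\rho)\cap\sp(H_{\mathcal X})=\emptyset$ becomes $\supp(\rho)\cap S^F=\emptyset$\,.

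It then remains to apply Theorem \ref{parriel} — or rather the argument behind it, which passes through the $\varphi$-uniform Theorem \ref{grigore} and therefore works for the prescribed $\varphi\in C^\infty_{\rm c}(\Xi)_+^{\rm n}$ — to obtain $\mathcal W\in\mathcal N_F$ with $\Vert\,\Op(\chi_W^\varphi)\,\rho(H)\,\Vert_{\mathbb B(\H)}\le\epsilon$ for $W:=\mathcal W\cap\Xi$\,; and then to observe that $W\in\mathcal X$\,, since $\mathcal W$ is a neighborhood of $F$, hence of $\si_{\mathcal X}\in F$\,, in $\Si$\,, and $\mathcal X$ converges to $\si_{\mathcal X}$\,. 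The main obstacle I foresee is the identification $H_{\mathcal X}=H^{\si_{\mathcal X}}$: it requires reconciling the two constructions of asymptotic operators (the ultrafilter limit of \cite{Ma3} versus the orbit-evaluation of \cite{Ma2}) and fixing the topology on the translated symbols in which $\Op$ is continuous for strong convergence, together with the preliminary fact — to be quoted precisely — that $\mathbb B^\infty(\H)=\Op(\BC^\infty(\Xi))$\,.
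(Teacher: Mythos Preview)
Your approach is correct and is essentially the paper's own: the paper's proof merely cites $\mathbb B^\infty(\H)=\Op[\BC^\infty(\Xi)]$ from \cite{Ma3} and then says ``the methods of the previous sections became available and the proof is very similar to the proof of Theorem \ref{parriel}''. You supply exactly the details the paper leaves to the reader --- the compactification adapted to $h$, the identification of $\mathcal X$ with a boundary point $\si_{\mathcal X}\in\Si_\infty$, the matching $H_{\mathcal X}=H^{\si_{\mathcal X}}$, and the observation that the resulting $W$ lies in $\mathcal X$ --- so your write-up is in fact more complete than the paper's sketch.
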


\begin{proof}
In \cite[Prop. 3.1]{Ma3} it has been shown that $\,\mathbb B^0(\H)=\Op\left[\mathfrak{BC}_{\rm u}(\Xi)\right]$\,, where $\mathfrak{BC}_{\rm u}(\Xi)$ is the Rieffel quantization of the $C^*$-algebra ${\sf BC}_{\rm u}(\Xi)$ of all bounded uniformly continuous functions on $\Xi$\,. As we said above, this one is the largest one on which $\Xi$ acts continuously by translations (that were denoted by $\mathcal T$). It is well-known that 
\begin{equation*}\label{legatura}
\pi(X)\Op(f)\pi(-X)=\Op[\mathcal T_X(f)]\,,\quad\ \forall\,X\in\Xi\,.
\end{equation*}
Then, clearly, one also has 
$$
\mathbb B^\infty(\H)=\Op\left[\mathfrak{BC}^\infty(\Xi)\right]=\Op\left[{\sf BC}^\infty(\Xi)\right]\,.
$$ 
Then the methods of the previous sections became available and the proof is very similar to the proof of Theorem \ref{parriel}. 
\end{proof}

Finally we treat a more general case. The action $\Th$ will no longer be composed of translations.
Our framework  starts with a continuous action $\Theta$ of $\Xi$ by homeomorphims of the locally compact space $\Sigma$\,. 
The action $\Theta$ of $\Xi$ on $\Sigma$ induces a continuous action of $\Xi$ on $\C_0(\Sigma)$ given by $\Theta_X(f)=f\circ \Theta_X$\,.
We want to realize the algebra  $(\C_0(\Sigma),\Theta,\Xi)$ in a subalgebra of $({\sf BC}_{\rm u}(\Xi),\mathfrak{T},\Xi)$ by a $\Xi$-monomorphim. For this purpose, it is convenient to have a closer look at the quasi-orbit structure of the dynamical system $(\Si,\Theta,\Xi)$ in connection with $C^*$-algebras and Hilbert space representations.

Let us use the convenient notation $\Th_\si(X):=\Th_X(\si)$\,. For each $\si\in\Si$, we denote by $E_\si:=\overline{\Theta_\si(\Xi)}$ {\it the quasi-orbit generated by} $\si$ and set
$$
\P_\si:\C_0(\Sigma)\to\BC_{\rm u}(\Xi),\quad\P_\sigma(f):=f\circ\Theta_\sigma\,.
$$
The range of the $\Xi$-morphism $\P_\si$ is called $\B_\sigma$ and it is a $\Xi$-algebra. Defining analogously $\P'_\si:\C_0(E_\sigma)\to\BC_{\rm u}(\Xi)$ one gets a $\Xi$-monomorphism with the same range $\B_\sigma$, which shows that the Gelfand spectrum of $\B_\si$ can be identified with the quasi-orbit $E_\si$\,.

For each quasi-orbit $E$, one has the natural restriction map
$$
\R_E:\C_0(\Si)\to\C_0(E),\quad\R_E(f):=f|_E,
$$
which is a $\Xi$-epimorphism. Actually one has $\P_\si=\P'_\si\circ \R_{E_\si}$\,.

Being respectively invariant under the actions $\Theta$ and $\mathcal T$, the $C^*$-algebras $\C_0(E)$
and $\B_\sigma$ are also subject to Rieffel deformation. By quantization, one gets $C^*$-algebras and morphisms
$$
\RR_E:\CC_0(\Si)\to\CC_0(E),\qquad\PP_\si:\CC_0(\Si)\to\BB_\si,\qquad\PP'_\si:\CC_0(E_\si)\to\BB_\si\,,
$$
satisfying $\PP_\si=\PP'_\si\circ \RR_{E_\si}$. While $\RR_E$ and $\PP_\si$ are epimorphisms, $\PP'_\si$ is an isomorphism.

We also need Hilbert space representations. 
For each $\Xi$-algebra $\B$, we restrict $\Op$ from $\BC^\infty(\Xi)$ to $\,\B^\infty=\BB^\infty$ (the dense
$^*$-algebra of smooth vectors of $\B\,$) and then we extend it to a faithful representation in $\H=L^2(\X)$ of the $C^*$-algebra $\BB$\,.
We can apply the construction to the $C^*$-algebras $\BB_\si$. By composing, we get a family $\big\{\,\Op_\sigma:=\Op\circ\PP_\sigma\,\vert\, \sigma\in\Sigma\,\big\}$ of representations of $\CC_0(\Sigma)$ in $\H$, indexed by the points of $\Sigma$\,. For $f\in\CC_0^\infty(\Sigma)$ one has $\PP_\sigma(f)\in\BB_\sigma^\infty=\B_\si^\infty$, and the action on $\H$ is given by
\begin{equation}\label{tein}
\big[\Op_\sigma(f)u\big](x)=(2\pi)^{-n}\!\int_\X\!d y\int_{\X^*}\!\!\!d\xi\,e^{i(x-y)\cdot\xi}f\Big[\Theta_{\left(\frac{x+y}2,\xi\right)}(\sigma)\Big]u(y)
\end{equation}
in the sense of oscillatory integrals. If the function $f$ is real, all the operators $\Op_\si(f)$ will be self-adjoint. To conclude, a single element $\,f\in\CC_0(\Sigma)\,$ leads to a family $\big\{\,H_\sigma:=\Op_\sigma(f)\,\vert\,\sigma\in\Si\,\big\}$ of bounded operators in $L^2(\X)$\,. Note that, seen as a quantization of the symbol $f$\,, (\ref{tein}) can be quite different from a Weyl operator.

\begin{Remark}\label{vigronia}
{\rm Notice that $\Op_\si$ is faithful exactly when $\PP_\si$ is injective, i.e. when $\P_\si$ is injective, which is obviously equivalent to $E_\si=\Si$\,.
Consequently, if the dynamical system is not {\it topologically transitive} (i.e. no orbit is dense in $\Si$), none of the Schr\"odinger-type representations $\Op_\si$ will be faithful. In such a case, we are not able to transform the abstract algebraic Theorem \ref{grigore} into an assertion involving operators.}
\end{Remark}

So we restrict now to the case of a topologically transitive dynamical system and study the operators $\Op_{\si}(h)$ associated to a suitable symbol $h$ and a {\it generic point} $\si\in\Si$\,, \ie a point generating a dense orbit. The non-generic points $\tau$ will define subsets $S_\tau$ of the essential spectrum of $\Op_{\si}(h)$
as well as regions of non-propagation for its evolution group.

\begin{Theorem}\label{pariell}
Let $(\Sigma,\Theta,\Xi)$ a topologically transitive dynamical system, $(\C_0(\Sigma),\Theta,\Xi)$ its associated C*-dynamical system and $\si\in\Si$ a generic point.

For a fixed real function $h\in\C_0^\infty(\Si)=\CC_0^\infty(\Si)$ set $H_\si:=\Op_{\si}(h)$\,. Choose a non-generic point $\tau\in\Si$\,, denote by $E_\tau$ its quasi-orbit (strictly contained in $\Si$) and set $H_\tau:=\Op_{\tau}(h)$\,. Let $\rho:\mathbb R\rightarrow\mathbb R_+$ be a continuous function such that $\,\supp(\rho)\cap{\rm sp}(H_\tau)=\emptyset$\,. 

For every $\varepsilon>0$ there exists a neighborhood $\W$ of $E_\tau$ in $\Si$ and a positive function $\varphi\in C^\infty_{{\rm c}}(\Xi)$ with $\int_\Xi\varphi=1$ such that
\begin{equation}\label{localiz}
\big\Vert\,\Op_{\si}\!\left(\chi_\W^\varphi\right)\rho(H_\si)\,\big\Vert_{\mathbb B(\H)}\le\varepsilon\,.
\end{equation}
In particular, one has uniformly in $t\in\mathbb R$ and  $u\in L^2(\X)$
\begin{equation}\label{nopropag}
\big\Vert\,\Op_{\si}\!\left(\chi_{\W}^\varphi\right)e^{itH_\si}\rho(H_\si)u\,\big\Vert\le\varepsilon\!\parallel\!u\!\parallel.
\end{equation}
\end{Theorem}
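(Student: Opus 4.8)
The plan is to mimic the proof of Theorem \ref{parriel}, with the faithful Weyl representation $\Op$ of $\mathfrak{BC}_{\rm u}(\Xi)$ replaced by the Schr\"odinger-type representation $\Op_\si=\Op\circ\PP_\si$ attached to the generic point $\si$\,. Since $\si$ generates a dense orbit, $E_\si=\Si$\,, hence $\PP_\si$ coincides with the isomorphism $\PP'_\si$\,; as $\Op$ is faithful on $\BB_\si$\,, the composite $\Op_\si$ is a faithful (thus isometric) representation of $\CC_0(\Si)$\,. Extending it to the multiplier algebra, it commutes with the continuous functional calculus, so $\rho(H_\si)=\Op_\si\!\left(\rho(h)\right)$\,.

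First I would rephrase the spectral hypothesis in $C^*$-algebraic terms. Put $F:=E_\tau$\,, a closed $\Th$-invariant subset of $\Si$ (in fact a quasi-orbit), strictly contained in $\Si$\,. Because $\tau$ generates a dense orbit in $E_\tau$\,, the map $\PP'_\tau:\CC_0(E_\tau)\to\BB_\tau$ is an isomorphism, and $\Op$ is faithful on $\BB_\tau$\,; since $\Op_\tau=\Op\circ\PP'_\tau\circ\RR_{E_\tau}$ with $\RR_{E_\tau}$ the restriction epimorphism, one gets
$$
{\sp}(H_\tau)={\sp}_{\CC_0(E_\tau)}\!\big(h|_{E_\tau}\big)={\sp}_{\CC_0(\Si)^F}(h)={\sp}(h^F)\,,
$$
the spectrum of $h^F:=h|_F$ computed in the quotient $\CC_0(\Si)/\CC_0(\Si)^F$\,. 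So $\supp(\rho)\cap{\sp}(h^F)=\emptyset$\,, and Lemma \ref{sigfrid} with $\AA=\CC_0(\Si)$ and $\JJ=\CC_0(\Si)^F$ gives $\rho(h)\in\CC_0^\infty(\Si)^F$\,.

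Next I would invoke Theorem \ref{grigore} with this $h$\,, this $F$\,, the chosen normalized positive $\varphi$\,, the given $\varepsilon$ and $k=0$\,, producing $\W\in\mathcal N_F$ with $\p\!\chi_\W^\varphi\#\rho(h)\!\p_{\mathfrak B_\Th(\Si)}\le\varepsilon$\,. To transport this estimate to $\mathbb B(\H)$ through $\Op_\si$\,, I would first note that $\PP_\si$ is defined already on $\B_\Th(\Si)$: for $f\in\B_\Th(\Si)$ one has $[\PP_\si(f)](X)=f(\Th_X\si)$ and $\p\!\mathcal T_Y\PP_\si(f)-\PP_\si(f)\!\p_\infty\le\p\!\Th_Y(f)-f\!\p_\infty$\,, so $\PP_\si(f)\in\BC_{\rm u}(\Xi)$ and $\PP_\si:\B_\Th(\Si)\to\BC_{\rm u}(\Xi)$ is a $\Xi$-equivariant $^*$-morphism. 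By functoriality of Rieffel's deformation it lifts to a (contractive) $C^*$-morphism $\PP_\si:\mathfrak B_\Th(\Si)\to\mathfrak{BC}_{\rm u}(\Xi)$\,; composing with the representation $\Op$ of $\mathfrak{BC}_{\rm u}(\Xi)$ in $\H$\,, one obtains that $\Op_\si$ is a norm-contractive representation of $\mathfrak B_\Th(\Si)$ which, on smooth vectors, is multiplicative for $\#$\,. Therefore
$$
\Op_\si\!\left(\chi_\W^\varphi\right)\rho(H_\si)=\Op_\si\!\left(\chi_\W^\varphi\right)\Op_\si\!\left(\rho(h)\right)=\Op_\si\!\left(\chi_\W^\varphi\#\rho(h)\right)\,,
$$
whence $\p\!\Op_\si(\chi_\W^\varphi)\rho(H_\si)\!\p_{\mathbb B(\H)}\le\p\!\chi_\W^\varphi\#\rho(h)\!\p_{\mathfrak B_\Th(\Si)}\le\varepsilon$\,, which is (\ref{localiz}). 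Finally (\ref{nopropag}) is formal: $e^{itH_\si}$ commutes with $\rho(H_\si)$\,, so $\Op_\si(\chi_\W^\varphi)e^{itH_\si}\rho(H_\si)=\big[\Op_\si(\chi_\W^\varphi)\rho(H_\si)\big]e^{itH_\si}$ with $e^{itH_\si}$ unitary.

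The step I expect to require the most care is the first one: identifying the genuine operator spectrum ${\sp}(H_\tau)$ with the $C^*$-algebraic quotient spectrum ${\sp}_{\CC_0(\Si)^{E_\tau}}(h)$\,, since this is exactly where the quasi-orbit structure of the merely topologically transitive system $(\Si,\Th,\Xi)$ enters, relying on the analysis of \cite{Ma2}. A secondary, elementary but necessary point is the verification that $\PP_\si$ is well behaved on $\B_\Th(\Si)$ (not merely on $\CC_0(\Si)$), so that $\Op_\si$ is defined and contractive on all of $\mathfrak B_\Th(\Si)$ and can receive the conclusion of Theorem \ref{grigore}.
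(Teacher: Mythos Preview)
Your proof is correct and follows essentially the same route as the paper: establish faithfulness of $\Op_\si$ from genericity of $\si$, take $F=E_\tau$, identify ${\sp}(H_\tau)$ with ${\sp}(h^F)$, and then apply Theorem~\ref{grigore} exactly as in the proof of Theorem~\ref{parriel}. Your write-up is in fact more explicit than the paper's (which simply refers back to the proof of Theorem~\ref{parriel}); in particular you spell out the extension of $\PP_\si$ to $\B_\Th(\Si)$ and note that on this larger algebra $\Op_\si$ is only guaranteed to be contractive rather than isometric, which is all that is needed.
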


\begin{proof}
Since $(\Sigma,\Theta,\Xi)$ is topologically transitive and $\si$ is generic, the mapping $\Op_{\si}$ is a faithful representation of our algebra $\CC_0(\Si)$\,.

Then the present result follows easily from Theorem \ref{grigore}, along the lines of the proof of Theorem \ref{parriel}. The role of the closed invariant subset $F$ is played here by the non-generic quasi-orbit $E_\tau$\,.
\end{proof}

\begin{Remark}\label{lustr}
{\rm Recall that we set $\Op_{\si}\!\left(\chi_\W^\varphi\right):=\Op\left(\chi_\W^\varphi\circ\Th_\si\right)$\,. The function $\chi_\W^\varphi\circ\Th_\si$ is a mollified version of $\chi_\W\circ\Th_\si$\,, which in its turn is the characteristic function of the subset $\Th_\si^{-1}(\W)$ of the phase-space $\Xi$\,. By our choice of the points $\si,\tau$\,, one has $\mathcal O_\tau\cap\mathcal O_\si=\emptyset$ and $E_\tau\subset\mathcal W\subset E_\si=\Si$\,, where the inclusions are strict.
}
\end{Remark}

\begin{Remark}\label{restr}
{\rm For a better understanding of the dependence on the points $\si$ and $\tau$\,, we recall some results from \cite{Ma2}. If $\si_1,\si_2$ belong to the same orbit ($\mathcal O_{\si_1}=\mathcal O_{\si_2}$)\,, the two operators $H_{\si_1}$ and $H_{\si_2}$ are unitarily equivalent. If the two points only generate the same quasi-orbit 
$E_{\si_1}:=\overline{\mathcal O_{\si_1}}=\overline{\mathcal O_{\si_1}}=:E_{\si_2}$ 
they may not be unitarily equivalent, but they still have the same spectrum and the same essential spectrum. In applications, very often, there is a privileged generic point $\si_0$ defining an interesting Hamiltonian $H_{\si_0}$ as in (\ref{tein}) and the remaining objects are auxiliary constructions. Their usefulness comes from the fact that the behavior of the symbol requires a topological dynamical system encoding spectral information.
  }
\end{Remark}

\medskip
\noindent
{\bf Acknowledgements.}
M. M\u antoiu has been supported by the Fondecyt Project 1120300. The authors are grateful to Serge Richard for a critical reading of the manuscript.

\nocite{*}
\bibliographystyle{cdraifplain}
\bibliography{xampl}

\end{document}